\theoremstyle{definition}
\newtheorem{defi}{Definition}[section]
\theoremstyle{plain}
\newtheorem{lem}[defi]{Lemma}
\newtheorem{thm}[defi]{Theorem}
\newtheorem{cor}[defi]{Corollary}
\newtheorem{ques}[defi]{Question}
\newtheorem*{stel*}{Theorem}
\theoremstyle{remark}
\newtheorem{opm}[defi]{Remark}
\newtheorem{vb}[defi]{Example}
\newcommand{\La}{\mathcal{L}}
\newcommand{\nat}{\mathbb{N}}
\newcommand{\zz}{\mathbb{Z}}
\newcommand{\rr}{\mathbb{R}}
\newcommand{\ff}{\mathbb{F}}
\newcommand{\qq}{\mathbb{Q}}
\newcommand{\Lar}{{\La_{\rm ring}}}
\newcommand{\LaP}{\La_{\rm {Ph}}}
\newtheorem*{prop*}{Proposition}
\newcommand{\mc}{\mathcal}
\newcommand{\mf}{\mathfrak}
\newcommand{\mbb}{\mathbb}
\renewcommand{\div}{\operatorname{div}}
\newcommand{\sep}[3][]{
\ifx &#1&
{{#2}{(\SePa#3)}}
\else
{{#2}_{(#3, #1)}}
\fi
}
\DeclareMathOperator{\charac}{char}
\DeclareMathOperator{\Th}{Th}
\DeclareMathOperator{\Trd}{Trd}
\DeclareMathOperator{\Nrd}{Nrd}
\DeclareMathOperator{\red}{\mathsf{red}}
\DeclareMathOperator{\Aut}{Aut}
\newcommand{\qraq}{\quad\rightarrow\quad}
\title[Decidability of polynomial equations over function fields]{Decidability of polynomial equations over function fields in positive characteristic}
\author{Nicolas Daans}
\date{\today}
\address{KU Leuven, Department of Mathematics, Celestijnenlaan 200B, 3001 Heverlee, Belgium}
\email{nicolas.daans@kuleuven.be}
\address{Université de Mons, Département de Mathematique, Place du Parc 20, 7000 Mons, Belgium}
\email{nicolas.daans@umons.ac.be}
\keywords{Hilbert's 10th Problem, algebraic function field, Frobenius automorphism, definable valuation}
\subjclass{Primary: 12L05. Secondary: 03B25, 11R58, 11U05, 12F20.}
\begin{document}
\begin{abstract}
Let $K$ be a field of positive characteristic with no algebraically closed subfield.
Let $F$ be a function field over $K$ and $t \in F$ transcendental over $K$.
Refining a result of Eisentr{\"a}ger and Shlapentokh \cite{EisShlap17}, we show that there is no algorithm which, on input a polynomial $f \in \zz[t][X_1, \ldots, X_n]$, determines whether $f$ has a zero in $F^n$.
To this end, we revisit and partially extend several recent results from the literature on existential definability in function fields.
\end{abstract}
\maketitle

\section{Introduction}
Give a computer any multivariate polynomial with integer coefficients $f \in \zz[X_1, \ldots, X_n]$, and it can tell you whether or not it has a zero in $\rr^n$ - at least, given enough time and computational resources.
This follows from Tarski's foundational work around 1950, in which he developed a decision procedure for questions on the arithmetic of the field of real numbers $\rr$ \cite{Tar51}.
Later work by Ax and Kochen yielded a similar decision procedure for the arithmetic of fields of $p$-adic numbers; in particular, for any prime number $p$, one can decide algorithmetically whether polynomials with integer coefficients have a zero over $\qq_p$ \cite{Ax-Kochen-DiophantineLocalIII} (see also the earlier work of Nerode on $\zz_p$ \cite{Nerode}).

It is believed that the situation may look very differently when replacing the field of real numbers $\rr$ with a number field, like the field of rational numbers $\qq$.
But while it has been known since the work of Robinson (building on G{\"o}del's Incompleteness Theorems) that the full first-order of the field $\qq$ (or any number field) is undecidable \cite{Rob49,Rob65}, we still do not know whether the same holds for its positive-existential theory, i.e. whether there can be an algorithm which can determine the solvability of polynomial equations over $\qq$.

Unable for now to answer this question for number fields, we might turn our attention to their positive-characteristic counterparts: global function fields.
For example, denoting for a field $K$ by $K(t)$ the rational function field over $K$, we may ask:
\begin{ques}\label{Q:H10-ff_p(T)}
Let $K$ be a finite field.
Does there exist an algorithm which, upon being given as input a natural number $n \in \nat$ and $f \in \zz[X_1, \ldots, X_n]$, decides whether there exists $x \in K(t)^n$ such that $f(x) = 0$?
\end{ques}
This question, too, remains unresolved to this day. Anscombe and Fehm recently established a positive answer to a variant of \Cref{Q:H10-ff_p(T)}, with $K(t)$ replaced by the field of formal Laurent series $K(\!(t)\!)$ \cite{AF16}, whereas Tyrrell-Nic Dhonncha proved undecidability of a slightly more general decision problem in \cite{H10Tyrrell-NicDohnncha}.

So far, we have only discussed solvability of polynomial equations with integer coefficients.
We can replace the ring of integers $\zz$ by a more general commutative ring $R$, but before we can sensibly phrase questions about algorithms taking elements of $R$ as input, we need to fix a recursive presentation of $R$.
In this paper, we will restrict our attention to the situation where $R$ is finitely generated.
In this case, $R$ naturally has a recursive presentation and any two recursive presentations of $R$ are equivalent (see for example \cite[Section 2]{Handbook-CompRingsFields}, where this property is called computable stability), whence one can unambiguously speak about algorithms which take a polynomial $f \in R[X_1, \ldots, X_n]$ as input.
Given a commutative ring $K$ endowed with a morphism $R \to K$ (often implicit when clear from the context), let us say that \emph{$\Th_{\exists}(K, R)$ is decidable} if there exists an algorithm which, upon receiving as input $n \in \nat$ and $f \in R[X_1, \ldots, X_n]$, decides whether there exists $x \in K^n$ with $f(x) = 0$.
Otherwise, we say that \emph{$\Th_{\exists}(K, R)$ is undecidable}.
The results we discussed so far can be summarised by saying that $\Th_{\exists}(\rr, \zz)$, $\Th_{\exists}(\qq_p, \zz)$ and $\Th_{\exists}(\ff_p(\!(t)\!), \zz)$ are decidable, and \Cref{Q:H10-ff_p(T)} asks whether the same holds for $\Th_{\exists}(\ff_p(t), \zz)$.
In the literature, the question of decidability of $\Th_\exists(K, R)$ is often referred to as \emph{Hilbert's Tenth Problem over $K$ (with coefficients in $R$)}: indeed Hilbert had asked for a decision algorithm for $\Th_\exists(\zz, \zz)$, but the work of Davis, Putnam, Robinson and Matiyasevich showed that $\Th_\exists(\zz, \zz)$ is undecidable \cite{Mat70}.

In contrast to the open \Cref{Q:H10-ff_p(T)}, there is the following result, established first by Pheidas in odd characteristic and later by Videla in characteristic $2$:
\begin{thm}[Pheidas \cite{PheidasHilbert10}, Videla \cite{Videla}]\label{TI:Pheidas}
Let $K$ be a finite field. Then $\Th_\exists(K(t), \zz[t])$ is undecidable.
\end{thm}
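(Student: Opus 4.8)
\emph{Strategy.} The plan is to deduce the theorem from the Davis--Putnam--Robinson--Matiyasevich theorem, recalled above, that $\Th_\exists(\zz,\zz)$ is undecidable. It suffices to produce a \emph{positive-existential interpretation} of the ring of integers $\zz$ (or of some arithmetic structure which itself has undecidable positive-existential theory, by a reduction from $\Th_\exists(\zz,\zz)$) inside the structure $(K(t), \zz[t])$, using the parameter $t$ and coefficients in $\zz[t]$ --- which, for $K$ a finite field, means using the symbols $0$, $1$, $t$ freely. Concretely, one wants positive-existential formulas over $K(t)$ with coefficients in $\zz[t]$ defining a set $D \subseteq K(t)$ together with the graphs of two ternary relations on $D$ which, under a surjection $D \twoheadrightarrow \zz$, correspond to addition and multiplication. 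Granting this, an input polynomial $g(Y_1,\dots,Y_m) \in \zz[Y_1,\dots,Y_m]$ is translated effectively into a positive-existential sentence over $K(t)$ with coefficients in $\zz[t]$ that holds if and only if $g = 0$ has a solution in $\zz^m$; collapsing this sentence to a single polynomial equation by the usual manipulations --- a disjunction $h_1 = 0 \vee h_2 = 0$ becoming $h_1 h_2 = 0$, and a conjunction $f_1 = \dots = f_r = 0$ becoming the vanishing of the (anisotropic) norm form of a degree-$r$ extension of $K(t)$ --- exhibits an instance of $\Th_\exists(K(t), \zz[t])$. Hence a decision procedure for $\Th_\exists(K(t), \zz[t])$ would decide $\Th_\exists(\zz,\zz)$, which is impossible.

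\emph{Building the interpretation.} It is natural to encode an integer $n$ by the power $t^{n} \in K(t)$; equivalently, writing $p$ for the characteristic of $K$, one may encode $n \ge 0$ by $t^{p^{n}}$, so that the successor becomes the Frobenius $x \mapsto x^{p}$. Three ingredients are then needed. First, the set $P = \{\, t^{n} : n \in \zz \,\}$ of powers of $t$ must be positive-existentially definable; since a nonzero element of $K(t)$ lies in $P$ up to a factor from $K^{\times}$ exactly when its divisor is supported on the places $t = 0$ and $t = \infty$, this reduces to a positive-existential definition of the polynomial ring $K[t]$ inside $K(t)$ (equivalently, of a suitable holomorphy subring) --- precisely the brand of definable-valuation input around which this paper revolves. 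Second, multiplication of field elements restricts on $P$ to $t^{a} \cdot t^{b} = t^{a+b}$, so addition of exponents is transported for free, and the Frobenius transports multiplication of an exponent by $p$. Third --- and this is the crux --- one needs a further positive-existentially definable relation on $P$ which, read on exponents, recovers genuine multiplication on $\zz$: with only addition and multiplication-by-$p$ on the exponents one obtains a Presburger-type structure, which is decidable, so a relation genuinely entangling the additive and the $p$-power structure on the exponents is indispensable. Here one uses that $t^{ab}$ arises from $t^{a}$ by the substitution $t \mapsto t^{b}$ --- equivalently that $t^{a} - 1$ divides $t^{b} - 1$ in $K[t]$ if and only if $a \mid b$ --- and encodes the relevant divisibility or composition condition by the solvability of an auxiliary polynomial equation built from the Frobenius.

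\emph{The main obstacle.} Supplying this last relation positive-existentially is the heart of the matter, and it is where the characteristic enters: in odd characteristic Pheidas realises the needed condition through an auxiliary equation of quadratic type, an argument which degenerates when $K$ has characteristic $2$ --- whence Videla's separate treatment, replacing the quadratic ingredient by an Artin--Schreier-type or higher-degree substitute. I would therefore proceed by: (i) isolating a concrete arithmetic relation on exponents --- a controlled divisibility or multiplication relation --- which, together with addition and the positive-existential availability of $P$, suffices to interpret $(\zz; +, \cdot)$; (ii) giving a positive-existential definition of this relation over $K(t)$ with $\zz[t]$-coefficients when $K$ has odd characteristic, following Pheidas; and (iii) carrying out the characteristic-$2$ analogue, following Videla. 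Combining (i)--(iii) with the positive-existential definition of $P$, hence of $K[t]$, and with the routine collapse of positive-existential formulas to single equations completes the interpretation of $\zz$, and therewith the proof. I expect (ii)--(iii) to be the main obstacle: everything else is essentially formal, whereas the crux relation demands a genuinely clever choice of auxiliary equation, and it is there that the characteristic split originates.
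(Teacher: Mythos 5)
Your high-level strategy (encode exponents as powers of $t$, use multiplication on $K(t)$ for exponent addition, and call on the Frobenius for the characteristic-$p$ arithmetic) starts off on the right track, but it misidentifies the arithmetic structure that actually needs to be interpreted, and that leads you down a harder road than the one Pheidas and Videla — and this paper — actually take.

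You write that with ``only addition and multiplication-by-$p$ on the exponents one obtains a Presburger-type structure, which is decidable,'' and conclude that one must also transport genuine multiplication (via the $t^a-1\mid t^b-1$ trick). This misjudges what the Frobenius gives. The positive-existential formula one has (and which this paper constructs in \Cref{C:define-Pn}) defines the full orbit relation $f = g^{p^s}$ \emph{for some $s \in \nat$}, not merely the single map $x\mapsto x^p$. On exponents this gives the relation $n \mid_p m \Leftrightarrow \exists s\, (m = np^s)$, which is emphatically \emph{not} Presburger-definable. And it turns out one needs nothing more: by a theorem of Pheidas \cite{Pheidas87}, the positive-existential theory of $(\nat;\ 0,1,+,\mid_p)$ is already undecidable. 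This is exactly the arithmetic structure the reduction interprets (see \Cref{T:Pheidas} in the paper), and it makes the whole divisibility-of-$t^a-1$ machinery, together with the attendant encoding of full multiplication, unnecessary. That machinery is what one reaches for in characteristic zero (over $\cc(t)$, say, following Denef), where no Frobenius is available; in positive characteristic the $\mid_p$ shortcut is the whole point, and it is what the rest of your outline was implicitly supplying the ingredients for.

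There is a second, smaller issue: to make the interpretation go through one does not need to define the set of powers of $t$ outright, but rather a valuation-like set $\mc{O}$ with $\mc{O}_v\cap\ff_p(t)\subseteq\mc{O}\subseteq\mc{O}_v$ for a suitable $\zz$-valuation $v$ (see \Cref{C:Pheidas}). Equality of exponents, zero, one, addition, and $\mid_p$ are then all expressed by combining $\mc{O}$-membership conditions with the Frobenius-orbit formula, as in the translation table in the proof of \Cref{T:Pheidas}. This is close in spirit to what you call ``definable-valuation input,'' so you had the right ingredient in hand; but positive-existential definability of the exact set $\{t^n : n\in\zz\}$ (or of $K[t]$) is both stronger than needed and harder to arrange directly. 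Your characterisation of the odd-vs-even characteristic split (Pell/quadratic versus Artin--Schreier) is broadly accurate, but in the argument as actually structured, that dichotomy sits inside the definability of the Frobenius orbit relation (cf.\ \Cref{thm:Frob-orbit-p} versus \Cref{thm:Frob-orbit-2}), not inside a separate encoding of exponent multiplication.
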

The methods of Pheidas and Videla were scrutinized and generalized in subsequent works, first by Kim and Roush to rational function fields over certain infinite base fields \cite{KimRoush-charp}, and then by the works of Shlapentokh and Eisentr{\"a}ger, first to arbitrary global function fields \cite{Shl96,Eis03}, then to more general function fields of positive characteristic, eventually leading to the following conclusion.
A finitely generated transcendental field extension $F/K$ is called a \emph{function field}.
\begin{thm}[Eisentr{\"a}ger-Shlapentokh, \cite{EisShlap17}]\label{TI:ES}
Let $K$ be a field of characteristic $p > 0$ not containing an algebraically closed subfield.
Let $F/K$ be a function field.
There exists a finitely generated subring $R$ of $F$ such that $\Th_\exists(F, R)$ is undecidable.
\end{thm}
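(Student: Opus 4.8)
The plan is to deduce undecidability from the Davis--Putnam--Robinson--Matiyasevich theorem (the undecidability of $\Th_\exists(\zz,\zz)$, equivalently of $\Th_\exists(\nat,\nat)$) by constructing a \emph{positive-existential interpretation} of the structure $(\nat; +, \cdot)$ in $F$ using finitely many parameters. Since $F$ has characteristic $p$, one cannot embed $\zz$ as a ring; instead, fix $t \in F$ transcendental over $K$ and encode $n \in \nat$ by $t^{n} \in F$, so that $(\nat, +)$ is carried isomorphically onto the monoid $(\{t^{n} : n \in \nat\}, \cdot)$. Addition of exponents is then automatic, and the problem reduces to producing positive-existential formulas over $F$, with parameters, defining (i) the set $T := \{t^{n} : n \in \nat\}$, and (ii) the graph of exponent-multiplication $M := \{(t^{a}, t^{b}, t^{ab}) : a, b \in \nat\}$. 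Granting (i) and (ii), let $R$ be the subring of $F$ generated by $\zz$, $t$, and the finitely many parameters appearing; then $(\nat; +, \cdot)$ is positive-existentially interpreted in $(F, R)$, and pulling back a diophantine non-recursive subset of $\nat$ shows that $\Th_\exists(F, R)$ is undecidable.

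The common tool for (i) and (ii) is a discrete valuation on $F$ made visible by existential formulas. As $F/K$ is a function field and $t$ transcendental over $K$, $F$ is finite over $K(t)$; pick a place $v$ of $F$ lying over the $t$-adic place of $K(t)$, so $v$ is trivial on $K$, $v(t) > 0$, and the residue field $F_{v}$ is a finite extension of $K$. (In characteristic $p$ the hypothesis on $K$ passes to such finite extensions, and likewise to the field of constants of $F$, since $\overline{\ff_p}$ has no proper finite extensions; so these fields also contain no algebraically closed subfield.) The first task is to show that $\mc{O}_{v}$ --- or at least $\{x \in F : v(x) \geq 0\}$, and preferably also $\{x : v(x) = 0\}$ --- is positive-existentially definable with parameters. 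This is exactly where the hypothesis that $K$ has no algebraically closed subfield enters: equivalently $\overline{\ff_p} \not\subseteq K$, so the field of constants of $F$ is not algebraically closed, and one has access to the function-field machinery --- Riemann--Roch together with norm-form and quaternion-algebra constructions, following Eisentr{\"a}ger and Shlapentokh and the subsequent work on existential definability of valuation and holomorphy rings in function fields --- for cutting out valuation rings by existential formulas. I would reuse and, where necessary, re-package these arguments to obtain a diophantine $\mc{O}_{v}$ and a diophantine predicate for ``$v(x) = n\, v(t)$ with $n \in \nat$'', which already yields (i).

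For (ii) one exploits the characteristic-$p$ structure. Since $v(x^{p}) = p\, v(x)$, the Frobenius $x \mapsto x^{p}$ acts as multiplication by $p$ on values; combining this with Artin--Schreier theory --- solvability of $\wp(y) = y^{p} - y = x$ being controlled, place by place, by $v(x) \bmod p$ --- and with the base-$p$ combinatorics of binomial coefficients (equivalently the identity $(1+t)^{p^{k}} = 1 + t^{p^{k}}$ and divisibility among the polynomials $t^{n} - 1$), one bootstraps from $T$ and the diophantine valuation to a diophantine definition first of the divisibility relation $\{(t^{a}, t^{b}) : a \mid b\}$ and then of $M$. This is in essence the mechanism of Pheidas and its successors; the characteristic $2$ case requires the usual separate care around $\wp(y) = y^{2} + y$, handled along Videla's lines.

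The main obstacle is the middle step: obtaining a genuinely \emph{positive-existential} (not merely first-order) definition of the valuation ring $\mc{O}_{v}$ over an essentially arbitrary base field $K$, subject only to containing no algebraically closed subfield. Valuation rings are naturally defined with quantifier alternation, and the available existential definitions in function fields are delicate and tend to be tailored to particular constant fields; the real work is to isolate precisely which property of $K$ makes such a definition possible --- it will be exactly the absence of an algebraically closed subfield, exploited through the arithmetic of the constant field of $F$ --- and to make the construction uniform enough to cover all cases simultaneously. A secondary, purely bookkeeping, difficulty is to verify that all parameters can be taken inside a single finitely generated subring $R$, as the statement requires.
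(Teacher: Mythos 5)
Your overall blueprint—fix a place $v$ of $F$ trivial on $K$ with $v(t)>0$, show that some set sandwiched between valuation rings is positive-existentially definable, and then exploit the Frobenius in characteristic $p$ to import an undecidable arithmetic problem—is the same high-level strategy the paper uses (in the form of \Cref{C:Pheidas} plus \Cref{T:defining-valuation}). Your identification of ``make the valuation ring diophantine'' as the technical bottleneck is also correct, and for \Cref{TI:ES} as stated (with $R$ at your disposal) it is legitimate to defer that step to the existing Eisentr{\"a}ger--Shlapentokh machinery.

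However, there is a genuine gap in the target of the interpretation. You propose to define positive-existentially the set $T=\{t^n : n\in\nat\}$ and the \emph{graph of multiplication} $M=\{(t^a,t^b,t^{ab})\}$, so as to interpret $(\nat;+,\cdot)$ outright and pull back a diophantine non-recursive set. No version of the Pheidas--Videla--Shlapentokh argument does this, and for a reason: one does not know how to define full multiplication of exponents positive-existentially in this setting. The route you sketch---first the divisibility relation $\{(t^a,t^b):a\mid b\}$, then $M$---does not close the gap, because even granting divisibility, Julia Robinson's definition of multiplication from $(+,\mid)$ is not positive-existential and you give no replacement. What the Frobenius-orbit predicate actually delivers (the analogue of \Cref{C:define-Pn}) is only the weaker relation $n\mid_p m \Leftrightarrow \exists s\ (m=np^s)$. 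The correct move, which the paper makes in \Cref{T:Pheidas}, is to reduce not to $\Th_\exists(\nat,\nat)$ but to Pheidas's theorem that the positive-existential theory of $(\nat;0,1,+,\mid_p)$ is already undecidable. With that target, the Frobenius-orbit predicate together with the sandwiched valuation set is exactly enough, and one encodes $n$ not by the single representative $t^n$ but by any $x$ with $v(x)=n$ (which avoids the separate, unnecessary task of defining $T$). As written, your step (ii) would fail, and the proof does not go through without replacing the goal of interpreting $(\nat;+,\cdot)$ by the goal of interpreting $(\nat;0,1,+,\mid_p)$.
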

The proof of \Cref{TI:ES} implicitly constructs the finitely generated ring $R$, but leaves open to what degree the result depends on this specific choice of this ring of coefficients.
In this paper, we revisit this result, and show that one can take for $R$ any subring of $F$ containing a transcendental element:
\begin{thm}[see \Cref{T:ES-constants}]\label{TI:ES-constants}
Let $K$ be a field of characteristic $p > 0$ not containing an algebraically closed subfield.
Let $F/K$ be a function field and let $t \in F$ be transcendental over $K$.
Then $\Th_\exists(F, \zz[t])$ is undecidable.
\end{thm}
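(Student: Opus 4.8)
The plan is to re-run the Eisentr{\"a}ger--Shlapentokh construction that establishes \Cref{TI:ES}, but under the extra discipline that \emph{every polynomial whose solvability in $F$ one ends up reasoning about has its coefficients in $\zz[t]$}; making this possible is precisely the purpose of the existential-definability results that the paper revisits and extends. It should be stressed that \Cref{TI:ES-constants} cannot simply be deduced from \Cref{TI:ES} by a soft model-theoretic projection: the finitely generated ring $R$ produced there may have generators transcendental over $\ff_p(t)$, and such an element need not be definable in $F$ over $\zz[t]$ at all --- there can be an automorphism of $F$ fixing $\ff_p(t)$, hence the whole image of $\zz[t]$, but moving the generator. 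So the refinement has to be extracted from the internal mechanics of the argument.

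The bookkeeping is organised by the following notion: a set $B\subseteq F^k$ is \emph{$\zz[t]$-diophantine} if $B=\{\bar y\in F^k : \exists\,\bar z\in F^m,\ g(\bar y,\bar z)=0\}$ for some $m\in\nat$ and $g\in\zz[t][\bar Y,\bar Z]$. If a tuple $\bar b\in F^k$ has $\{\bar b\}$ $\zz[t]$-diophantine, then $\Th_\exists(F,\zz[\bar b])$ reduces to $\Th_\exists(F,\zz[t])$: writing an input $f\in\zz[\bar b][\bar X]$ as $f=h(\bar b,\bar X)$ with $h\in\zz[\bar Y,\bar X]$, the equation $f(\bar x)=0$ is solvable in $F$ exactly when $\exists\,\bar y\,\exists\,\bar z\,\exists\,\bar x\ \big(g(\bar y,\bar z)=0\wedge h(\bar y,\bar x)=0\big)$ holds in $F$, and two --- hence any finite number of --- polynomial equations over a function field can be amalgamated into a single one with coefficients still in $\zz[t]$ (for instance via the anisotropic binary quadratic form $\langle 1,-P\rangle$, or $PX^2+PXY+Y^2$ when $\charac F=2$, for a suitable irreducible $P\in\ff_p[t]$). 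More generally, undecidability transfers along any diophantine interpretation whose ingredients --- universe, graphs of the interpreted operations, distinguished constants --- are all $\zz[t]$-diophantine in $F$. Hence it suffices to verify, at each point of the Eisentr{\"a}ger--Shlapentokh argument where a valuation, a subring, an auxiliary relation, or a constant is invoked, that a $\zz[t]$-diophantine substitute is available.

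To carry this out I would first normalise, replacing $K$ by its relative algebraic closure in $F$ --- a finite extension of $K$, which therefore still contains no algebraically closed subfield, since a finite extension of a field without an algebraically closed subfield has none either --- so that $F\cap\overline{\ff_p}$ is a finite field $\ff_q$; then $\ff_q$, and each of its elements, is in fact quantifier-free $\ff_p$-definable in $F$. The technical heart is a package of $\zz[t]$-diophantine definability statements in $F$: (a) the valuation ring $\mathcal O_v$ of a well-chosen discrete valuation $v$ of $F$ --- e.g.\ one lying over the $t$-adic place of $K(t)$ and arranged to have residue field without an algebraically closed subfield --- together with the sets $\{x : v(x)\ge n\}$ for each fixed $n$; (b) the Frobenius-orbit relation $\{(x,y) : y=x^{p^m}\ \text{for some}\ m\ge 0\}$ on the relevant domain, which --- exactly as in the work of Pheidas, Videla, Shlapentokh and Eisentr{\"a}ger --- is what makes the diophantine interpretation of an undecidable problem possible, and which is also where the hypothesis on $K$ is consumed (via the residue field of $v$), this being what prevents the orbit from collapsing to something undefinable; and (c) building on (a) and (b), the $\zz[t]$-diophantine definability of the finitely many algebraic constants the interpretation still needs, each isolated from its conjugates by prescribing its valuations at finitely many places. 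With (a)--(c) in hand one retraces the Pheidas--Videla--Shlapentokh--Eisentr{\"a}ger interpretation of an undecidable problem inside $F$ --- with universe the $\zz[t]$-diophantine Frobenius orbit of $t$ --- and checks line by line that every coefficient occurring is a polynomial in $t$ with integer coefficients, yielding the undecidability of $\Th_\exists(F,\zz[t])$.

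The genuine obstacle is the definability package, and within it items (a)--(b): defining the required discrete valuation, its higher powers, and the Frobenius-orbit relation \emph{using only parameters from $\zz[t]$}, and doing so uniformly over an arbitrary transcendental $t$ and an arbitrary function field $F$ about whose base field $K$ we assume essentially nothing beyond the absence of an algebraically closed subfield. The proof of \Cref{TI:ES} may choose a convenient presentation of $F$ and a generous ring of coefficients; neither liberty is available here, so each definability statement has to be made uniform in $t$ and essentially indifferent to the arithmetic of $K$ --- and it is precisely in taming $K$ that the Frobenius automorphism is brought to bear.
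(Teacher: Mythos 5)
Your high-level diagnosis is correct and matches the paper's: a soft reduction from the Eisentr{\"a}ger--Shlapentokh theorem fails because the auxiliary coefficient ring there may not be $\zz[t]$-diophantine, and the genuine work consists of producing (a) a valuation-like predicate and (b) a Frobenius-orbit predicate positive-existentially definable \emph{with $t$ as the only parameter}, then feeding these into a Pheidas-style interpretation of an undecidable structure. You also correctly flag that the hypothesis on $K$ must be consumed somewhere in the valuation package. All of this is consistent with the architecture of \Cref{C:Pheidas}, \Cref{T:defining-valuation}, and \Cref{T:ES-constants}.

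The gap is that the proposal supplies no actual method for the step you yourself identify as the genuine obstacle, and the method you do gesture at --- ``retrace the Pheidas--Videla--Shlapentokh--Eisentr{\"a}ger interpretation... and check line by line that every coefficient occurring is a polynomial in $t$'' --- is precisely what the paper argues cannot work as stated: those constructions pick auxiliary constants (places of convenient degree, elements with prescribed divisors, generators isolated by Strong Approximation over a chosen ring of $S$-integers) adapted to the arithmetic of the particular $F$, and there is no reason for them to land in $\zz[t]$ or be uniformly $\zz[t]$-diophantine. The paper instead \emph{replaces} both definability engines: for (b), it does not use the Eisentr{\"a}ger--Shlapentokh $p$-th power machinery but rather Pasten's Hurwitz-formula degree-count (\Cref{thm:Frob-orbit-p}), which had to be re-proved from scratch for $p=2$ using Artin--Schreier rather than square classes (the entire content of \Cref{S:p=2}, leading to \Cref{thm:Frob-orbit-2}); for (a), it does not use norm forms over rings of $S$-integers, but a cyclic $F$-algebra $(G,\sigma,t)$ with the transcendental $t$ itself as the symbol entry, so that the defining constants are automatically accounted for, together with the reduced-trace/reduced-norm set $T(\mc A)$ following Dittmann (\Cref{T:defining-valuation}). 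You also miss the role of \Cref{L:interpretation-argument}, which is the concrete mechanism the paper uses to trade $F$ for a well-chosen finite extension (purely inseparable and constant-field) \emph{without} changing the $\Lar(k)$-parameter budget --- a step your ``normalise by replacing $K$ by its relative algebraic closure'' waves at but does not justify in a parameter-tracking way. In short: the skeleton is right, the hard muscles are absent, and the specific technique you propose for them (line-by-line retracing of Eisentr{\"a}ger--Shlapentokh) would run into the very parameter dependence the theorem is trying to remove; the paper avoids it by substituting different proofs for the two definability lemmas, not by auditing the old ones.
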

The motivation for this is two-fold.
Firstly, we must remark that in general, the decidability of $\Th_\exists(K, R)$ really may change when the ring $R$ is replaced by another finitely generated ring.
For example, $\Th_\exists(\rr(t), \zz)$ is decidable (one may take exactly the same decision algorithm as for $\Th_\exists(\rr, \zz)$), but $\Th_\exists(\rr(t), \zz[t])$ is undecidable \cite{DenefDiophantine}; this gives an indication that \Cref{TI:Pheidas} may not tell us that much about what the answer to \Cref{Q:H10-ff_p(T)} should be.
Perhaps even more surprisingly, while $\Th_\exists(\rr, \zz)$ is decidable, this is no longer the case when a general real number is added to the ring of coefficients, as the following example (pointed out by Philip Dittmann) illustrates.
\begin{vb}
Let $r$ be an uncomputable real number.
Then $\Th_{\exists}(\rr, \zz[r])$ is undecidable.
Indeed, by definition of an uncomputable real number, there can be no algorithm which decides whether for given non-zero $a, b \in \zz$ one has $\frac{a}{b} \leq r$, but the relation $\frac{a}{b} \leq r$ is equivalent to the polynomial $X^2 - br + a \in \zz[r][X]$ having a root in $\rr$.
\end{vb}
This shows that \Cref{TI:ES-constants} strengthens \Cref{TI:ES} in a non-trivial way.

Our second motivation is methodological.
While at a high level our proof technique follows that of \cite{PheidasHilbert10,Videla,Shl96,EisShlap17},
we deviate from their arguments in some key ways.
Furthermore, as will be clear from the outline below, several intermediate definability results can be proven for function fields over arbitrary base fields, and as such may in the future help to establish new (un)decidability results for function fields as well.
We mention the following open question in this context, complementary to \Cref{Q:H10-ff_p(T)}.
Here, $\overline{\ff_p}$ denotes the algebraic closure of $\ff_p$.
\begin{ques}
Is $\Th_\exists(\overline{\ff_p}(t), \zz[t])$ decidable?
\end{ques}
Here, one may remark that undecidability of $\Th_\exists(\overline{\ff_p}(t_1, t_2), \zz[t_1, t_2])$ was shown by Kim and Roush for $p > 2$ \cite{KimRoushCt1t2}, later generalized to include finite extensions of $\overline{\ff_p}(t_1, t_2)$ as well by Eisentr{\"a}ger \cite{EisFunctionFieldsPositiveCharacteristic}.
Recently Becher, Dittmann and the author obtained more examples of function fields $F/K$ where $K$ contains $\overline{\ff_p}$ and $\Th_\exists(F, R)$ is shown to be undecidable for some well-chosen ring of coefficients $R$ \cite{BDD}, relying partially on the type of reduction techniques discussed in the present paper.

We now give an outline of the structure and some of the main ideas of this paper.
We shall find it convenient to use the framework of first-order logic.
Thus, in \Cref{S:definability}, the main tools we will need are briefly recalled.

We will then give a proof of the following existential definability result, which sharpens the main result of \cite[Section 5]{EisShlap17}.
Here, $\Lar$ denotes the first-order language of rings (see \Cref{S:definability} for a definition).

\begin{thm}[see \Cref{C:define-Pn}]\label{TI:Frobenius}
Let $K$ be a field of characteristic $p > 0$, $F/K$ a function field, $t \in F$ any element transcendental over $K$.
There exists a positive-existential $\Lar$-formula $\pi(x, y, z)$ such that for all $f, g \in F$ one has
$$ F \models \pi(f, g, t) \quad\Leftrightarrow\quad \exists s \in \nat : f = g^{p^s}.$$
\end{thm}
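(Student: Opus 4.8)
The plan is to characterize, by a positive-existential formula in the ring language, the relation "$f$ is a $p^s$-th power of $g$ for some $s \in \nat$" over a function field $F/K$ in characteristic $p$, using a single transcendental parameter $t$. The key idea, going back to Pheidas and exploited heavily in \cite{EisShlap17}, is that iterated Frobenius is intimately linked with divisibility of valuations and with the behaviour of the Frobenius endomorphism on the (perfect or imperfect) constant field. The first step is to reduce to the rational function field $K(t)$: since $F/K(t)$ is finite, an element $h \in F$ lies in $K(t)$ if and only if it satisfies a fixed polynomial relation, and more importantly the property $f = g^{p^s}$ is preserved and reflected along the inclusion $K(t) \hookrightarrow F$ for elements of $K(t)$; one needs an existential formula expressing "$x \in K(t)$" or, more robustly, works directly with valuations of $F$ lying over the $t$-adic and $t^{-1}$-adic places. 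So first I would isolate an existentially definable family of valuation rings of $F$ — the ones I can access are those of the rational subfield, via the standard trick that $1 + tx^2$ or a Kummer/Artin–Schreier-type condition forces a valuation constraint.

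Next, I would produce an existential formula expressing "$v(f) = p^s \cdot v(g)$ for all valuations $v$ in the relevant definable family, for a common $s$". The divisibility statement $p^s \mid v(f)$ at a single place is the hard definability nucleus; the approach is to use that for a place $v$ with residue characteristic $p$, an element is a $p$-th power in the completion iff it is a $p$-th power locally, and to iterate — but uniformly in $s$ we cannot quantify over $s$ directly, so instead I would encode "divisible by $p^s$ for some $s$" as "divisible by arbitrarily high powers of $p$ in a way controlled by a single auxiliary element", exploiting that $K$ has no algebraically closed subfield only at the very end (it is \emph{not} needed for \Cref{TI:Frobenius}, which holds over arbitrary $K$). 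Concretely, the cleanest route is: $f = g^{p^s}$ for some $s$ iff $g^{p^s} = f$ has a solution, which we rewrite using the Frobenius twist $F^{p} \subseteq F$; one shows $f \in \bigcup_s g^{p^s} \cdot (F^{p^s})^{0}$ by saying there is $u$ with $u^p$-divisibility pattern matching, controlled through the chain of subfields $F \supseteq F^p \supseteq F^{p^2} \supseteq \cdots$ which is existentially tracked by a single "depth" witness living in $F$ itself via the transcendence of $t$: the element $t$ has $v_t(t) = 1$, so $t \mid h^{p^s}$ in the appropriate local sense pins down $s$.

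The main obstacle, as in all work of this type, is the \emph{uniformity in $s$}: we must existentially quantify over the unbounded natural number $s$ without an explicit number quantifier. The standard resolution — and the one I would follow — is that a single element $z$ of $F$ (together with its valuation data at the definable places) can simultaneously encode the exponent $s$ and certify the power relation, because $v(f) = p^s v(g)$ at a place where $v(g) \neq 0$ determines $s$ from $f$ and $g$ alone; one then only needs to verify, via a \emph{fixed} positive-existential formula, that $f/g^{p^s}$ is a unit at enough places and a global constant, and finally that it equals $1$. I expect the technical heart to be checking that the relevant places (those over $t$ and $t^{-1}$ in $F$) are uniformly positive-existentially definable and that the resulting formula does not accidentally capture spurious solutions when $K$ is large; this is precisely where I would invoke the preliminary definability lemmas from \Cref{S:definability} and the earlier parts of the paper, and where the refinement over \cite{EisShlap17} — namely doing everything with the naked parameter $t$ rather than an auxiliary ring of coefficients — requires the most care.
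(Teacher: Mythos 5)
Your approach has a critical flaw that the paper's own argument is specifically designed to avoid. You propose to ``isolate an existentially definable family of valuation rings of $F$'' and then express divisibility of values by powers of $p$. But existentially defining valuation rings of a function field $F/K$ with only $t$ as a parameter is precisely the hard problem that the paper defers to \Cref{S:final}, and there it requires the extra hypothesis that $K$ contains no algebraically closed subfield. You correctly observe that \Cref{TI:Frobenius} holds over arbitrary $K$ of characteristic $p$ --- but your proposed route breaks down exactly when $K$ contains $\overline{\ff_p}$, since in that setting existential definability of nontrivial valuation rings with only $t$ as parameter is not known (compare the open \Cref{Q:H10-ff_p(T)} and the question about $\overline{\ff_p}(t)$ in the Introduction). \Cref{S:definability} contains no definability lemmas for valuations, only basic first-order setup and the interpretation trick \Cref{L:interpretation-argument}, so there is nothing there that could fill this gap.

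The paper's actual proof is of an entirely different character and does \emph{not} go through defining valuations. Following Pasten \cite{Pasten_FrobeniusOrbits}, one fixes a large prime $d$ and $M = M(\mathfrak{g}, d, p)$ pairwise coprime monic irreducible polynomials $F_1, \ldots, F_M$ of degree $d$, and detects membership in a common Frobenius orbit by the purely existential conditions ``$F_j(f)F_j(g)$ is a square'' (for $p>2$; \Cref{thm:Frob-orbit-p}) or an Artin--Schreier variant (for $p=2$; the new \Cref{thm:Frob-orbit-2}). The proof that these conditions characterize the orbit is a counting argument with the Hurwitz genus formula (\Cref{L:Frob-single-lem}), not a local analysis at definable places. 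The unbounded exponent $s$ never needs to be encoded in a field element: the asymmetric relation $f = g^{p^s}$ (as opposed to ``one is a Frobenius power of the other'') is captured in \Cref{T:define-Pn} by the auxiliary existential witness $u$ in
$$\exists u \colon \phi_{\mathfrak{g}}(u,z) \wedge \phi_{\mathfrak{g}}(x,y) \wedge \phi_{\mathfrak{g}}(ux, zy) \wedge \phi_{\mathfrak{g}}(u(x+1),z(y+1)),$$
which in the forward direction is instantiated to $u = t^{p^s}$ and in the backward direction is pinned down by a short divisor computation using the transcendence of $t$. Finally, the reduction in \Cref{C:define-Pn} passes to a one-variable function field over $K(t_1, \ldots, t_{d-1})$ (not to $K(t)$ itself, which would be neither necessary nor straightforwardly achievable existentially) and removes the ``$t$ not a $p$-th power'' hypothesis by writing $t = w^{p^m}$ for the maximal such $m$. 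In short: the genus-bound/Hurwitz technique is what makes the theorem uniform in $K$ with no hypotheses beyond transcendence of $t$, and a valuation-theoretic route of the kind you sketch would reintroduce exactly the constraints on $K$ that the theorem does not impose.
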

The formula $\pi$ in the above Theorem depends on the specific field $F$ only via an upper bound on the genus and some $n \in \nat$ for which $t^{1/p^n} \not\in F$, see \Cref{T:define-Pn} for a precise statement.

While our proof of \Cref{TI:Frobenius} shares ideas with the one given in \cite{EisShlap17}, it actually follows closely that of Pasten's \cite[Theorem 1.5]{Pasten_FrobeniusOrbits}, where \Cref{TI:Frobenius} is shown in the case $p > 2$ under a mildly stronger hypothesis on $t$.
In \Cref{S:p=2} we thus develop the algebraic background specifically for the case $p=2$, which caused issues in the proof of \cite[Theorem 1.5]{Pasten_FrobeniusOrbits}.
In \Cref{S:def-Frobenius} we prove \Cref{C:define-Pn} for arbitrary $p>0$, essentially redoing the proof of \cite{Pasten_FrobeniusOrbits}.

In \Cref{S:undecidability}, we explain how the aforementioned result reduces the problem of establishing undecidability of $\Th_\exists(F, R)$ for a function field $F$ to showing the positive-existential definability of a valuation-like predicate in the language of rings.
This idea goes back to \cite{PheidasHilbert10} and was even already analyzed more abstractly in \cite[Lemma 1.5]{Shl96} and \cite[Section 2]{PheidasZahidi_survey}, see also \cite[Section 2]{EisShlap17}.
We include it here in our set-up for the reader's convenience, and to emphasize the (in)dependence of specific parameters.

Finally, in \Cref{S:final} the proof of \Cref{TI:ES-constants} is completed by proving the positive-existential definability of the valuation-like predicate for function fields over fields of characteristic $p>0$ not containing an algebraically closed base field (\Cref{T:defining-valuation}).
Here we deviate substantially from the presentation of \cite{EisShlap17} and instead take inspiration from \cite{Dit17}, although several additional delicate reductions are needed to prevent the introduction of new parameters.

\subsection*{Acknowledgements and funding}
The author would like to thank Philip Dittmann for numerous helpful discussions over the years on topics related to the research presented in this paper, in particular regarding the proof of \Cref{T:define-Pn} and some of the methodological ideas used in \Cref{S:final}, for providing several helpful pointers to the literature, and for his instructive and elaborate feedback on an earlier draft of this paper.

The author gratefully acknowledges financial support from \emph{Research Foundation - Flanders (FWO)}, fellowship 1208225N.

\section{Preliminaries on definability}\label{S:definability}
We will phrase some of our results using the language of first-order logic.
We refer to \cite[Sections II and III]{Ebb94} for an introduction to first-order logic, and will only establish some notation and conventions here.

Consider a first-order language $\La$, consisting of a set of \emph{constant symbols}, and for each natural number $n \geq 1$, a set of \emph{$n$-ary function symbols} and a set of \emph{$n$-ary relation symbols} (all these sets together form what is called the \emph{symbol set} of the language in \cite[14]{Ebb94}).
An \emph{$\La$-structure} $\mc A$ consists of a set $A$ (the \emph{domain} of $\mc A$), for each constant symbol $c$ of $\La$ an element $c^{\mc A}$ of $A$, for each $n$-ary relation symbol $R$ of $\La$ an $n$-ary relation $R^{\mc A} \subseteq A^n$, and for each $n$-ary function symbol $F$ of $\La$ an $n$-ary function $F^{\mc A} : A^n \to A$.
In practice, we shall often abuse notation and use the same letter for a structure and its domain when there is little risk of confusion.

When introducing an $\La$-formula $\varphi$, we may write $\varphi(x_1, \ldots, x_n)$ for distinct variable symbols $x_1, \ldots, x_n$ to indicate that no variable other than $x_1, \ldots, x_n$ occurs freely in $\varphi$, and to fix an order of these variables. When $t_1, \ldots, t_n$ are $\La$-terms (like variable symbols or constant symbols), we may then later write $\varphi(t_1, \ldots, t_n)$ to mean the $\La$-formula obtained by substituting each free occurrence of $x_i$ by $t_i$ (as described in \cite[Section III.8]{Ebb94}).

An $\La$-formula without free variables is called an \emph{$\La$-sentence}.
Given an $\La$-structure $A$ and an $\La$-sentence $\varphi$, we write $A \models \varphi$ to mean that \emph{$\varphi$ is satisfied in $A$} (as defined in \cite[Section III.3]{Ebb94}).

A \emph{positive-existential $\La$-formula} is a formula which is logically equivalent to a formula of the form $\exists x_1 \ldots \exists x_n \psi$ where $\psi$ is a positive quantifier-free $\La$-formula, i.e.~a formula built up from atomic formulas using disjunction ($\vee$) and conjunction ($\wedge$), but no negation.
One sees that a finite conjunction or disjunction of positive-existential $\La$-formulas is again a positive-existential $\La$-formula.

Given an $\La$-structure $R$, some $n \geq 1$ and a set $S \subseteq R^n$, we say that an $\La$-formula $\varphi(x_1, \ldots, x_n)$ \emph{defines $S$} if
$$ S = \lbrace (a_1, \ldots, a_n) \in R^n \mid R \models \varphi(a_1, \ldots, a_n) \rbrace.$$
We will say that $S$ is \emph{$\La$-definable} if it is definable by an $\La$-formula.
If this formula can be chosen to be positive-existential, we call $S$ \emph{positive-existentially $\La$-definable.}

When $R$ is an $\La$-structure and $S \subseteq R$ a subset, we denote by $\La(S)$ the first-order language which expands $\La$ by a new constant symbol $c_r$ for each $r \in S$, and we interpret $c_r^R = r$.
We shall again often abuse notation and just write $r$ instead of $c_r$.

We denote by $\Lar$ the first-order language of rings, consisting of binary operation symbols $+, -, \cdot$ and constant symbols $0, 1$, and we will always interpret fields as $\Lar$-structures in the obvious way.
In fact, all $\Lar$-structures which we will consider will be fields.
Given polynomials $F,G \in \zz[X_1, \ldots, X_n]$, we may find an atomic $\Lar$-formula $\phi_{F,G}(x_1, \ldots, x_n)$ such that for any field $K$ and $a_1, \ldots, a_n \in K$ we have $F(a_1, \ldots, a_n) = G(a_1, \ldots, a_n)$ if and only if $K \models \phi_{F, G}(a_1, \ldots, a_n)$.
As the precise choice of this $\phi_{F, G}$ does not matter as long as we only ever evaluate the formula in fields, we will simply write $F = G$ for any such choice of formula $\phi_{F, G}$.

We record for later use the following lemma about deducing positive-existential definability of subsets of fields from positive-existential definability of subsets of finite field extensions.
It is a variant of a well-known reduction trick; we spell out a specific version of it given our heightened focus on keeping track of used constant symbols.

\begin{lem}\label{L:interpretation-argument}
Consider an arbitrary field extension $L/K$ and a finite normal field extension $N/K$.
Let $M$ be a common overfield of $L$ and $N$ over $K$ which is generated by $L \cup N$.
If $S \subseteq M^n$ is positive-existentially $\Lar(K)$-definable over $M$, then $S \cap L^n$ is positive-existentially $\Lar(K)$-definable over $L$.
\end{lem}
\begin{proof}
Consider $\mc D = L \otimes_K N$, and denote by $\mc D^{\red}$ its reduction (i.e.~quotient modulo the nilradical). We observe the following properties:
\begin{enumerate}
\item Under the inclusion $N \to \mc D : x \mapsto 1 \otimes x$, a $K$-basis of $N$ gets mapped to an $L$-basis of $\mc D$.
In particular, $\mc D$ is a finite-dimensional $L$-algebra, and it has a basis whose structure constants all lie in $K$.
\item There exists $k \in \nat$ such that every nilpotent element $a$ of $\mc D$ satisfies $a^k = 0$.
\item There is a subset $G \subseteq \Aut(N/K)$ and a well-defined $L$-isomorphism
$ \Phi : \mc D^{\red} \to M^G : $
such that $\Phi(a \otimes x) = (a \sigma(x))_{\sigma \in G}$ for all $a \in L$, $x \in N$.
\end{enumerate}
The first property is immediate from standard properties of the tensor product. The second property follows because $\mc D$ is artinian and hence the nilradical is nilpotent (see e.g.~\cite[Proposition 8.4]{Ati69}).

For the third property: as $\mc{D}^{\red}$ is a reduced artinian ring, it is isomorphic as an $L$-algebra to a finite product of field extensions of $L$, obtained by considering all quotients of $\mc{D}^{\red}$ modulo maximal ideals.
Clearly at least one of these quotients is equal to $M$, but since $N/K$ is normal, $\Aut(N/K)$ acts transitively on the spectrum of $\mc D^{\red}$, from which the desired statement follows.

Now consider a positive-existential $\Lar(K)$-formula $\varphi(x_1, \ldots, x_n)$ such that $S = \lbrace (b_1, \ldots, b_n) \in M^n \mid M \models \varphi(b_1, \ldots, b_n) \rbrace$.
We observe that
\begin{displaymath}
S \cap L^n = \{ (b_1, \ldots, b_n) \in L^n \mid \mc D^{\red} \models \varphi(b_1 \otimes 1, \ldots, b_n \otimes 1) \}.
\end{displaymath}
Indeed, in view of the $L$-isomorphism $\mc{D}^{\red} \cong M^G$ from (3), there clearly exist $L$-homomorphisms $\mc{D}^{\red} \to M$ and $M \to \mc{D}^{\red}$, and validity of positive-existential formulas is preserved under homomorphisms.

The positive-existential $\Lar(K)$-definability of $S \cap L^n$ in $L$ then follows via the usual quantifier-free interpretation of the finite-dimensional algebra $\mc D$ (as $\Lar(K)$-structure) in $L$ (as $\Lar(K)$-structure), and the fact that there exists a natural number $k$ such that every nilpotent element $a \in \mc{D}$ satisfies $a^k = 0$ (so for $a, b \in \mc{D}$ we have $\overline{a} = \overline{b}$ in $\mc{D}^{\red}$ if and only if $(a-b)^k = 0$ in $\mc{D}$).
\end{proof}
Finally, consider a finite language $\La$ (i.e.~with a finite symbol set).
We can naturally enumerate the set of positive-existential $\La$-sentences.
Hence, given an $\La$-structure $R$, we can ask whether the \emph{positive-existential theory of $R$ is decidable}, i.e.~whether there can be an algorithm which takes as input a positive-existential $\La$-sentence $\varphi$ and outputs YES if $R \models \varphi$ and NO otherwise.
See \cite{HMLElementsOfRecursion,HMLDecidableTheories} for background on (un)decidable theories.

When $K$ is a field and $R$ either a finitely generated subring or a finitely generated subfield of $K$, say with generators $r_1, \ldots, r_n$, then every positive-existential $\Lar(R)$-sentence can be effectively transformed into an equivalent (with respect to the validity in $K$) positive-existential $\Lar(\{ r_1, \ldots, r_n \})$-sentence, by replacing constant symbols from $R$ by polynomial (respectively rational) expressions in $r_1, \ldots, r_n$, and possibly clearing denominators afterwards.
We say that the \emph{positive-existential $\Lar(R)$-theory of $K$ is decidable} if the positive-existential $\Lar(\{ r_1, \ldots, r_n \})$-theory of $K$ is decidable, in the sense explained in the previous paragraph. As alluded to in the Introduction, this does in fact not depend on the choice of generators for $R$ (see \cite[Section 2]{Handbook-CompRingsFields}).
Furthermore, it corresponds precisely to what was called decidability of $\Th_{\exists}(K, R)$ in the Introduction: indeed, given a positive-existential $\Lar(R)$-sentence $\varphi$, one can effectively find a polynomial defined over $R$ with the property that $K \models \varphi$ if and only if the polynomial has a zero over $K$ (see for example \cite[Remark 3.4]{DDF}).

\section{Function fields in characteristic $2$}\label{S:p=2}
In this section, we will consider \emph{function fields in one variable} $F/K$, i.e.~finitely generated field extensions of transcendence degree $1$.
We will consider the \emph{genus} of such function fields, as defined in \cite[Chapter II]{Chevalley-Functions}.

Function fields in one variable over $K$ arise as the function fields of integral algebraic curves over $K$; we write $K(C)$ for the function field of an integral curve $C$ defined over $K$.
If $K$ is perfect, then every function field in one variable over $K$ is the function field of a smooth projective curve over $K$ \cite[Proposition 7.3.13, in view of Corollary 4.3.33]{Liu}.

Our general reference in this section for background on smooth curves over perfect fields will be \cite{Silverman_EllCurves}.
Given such a smooth projective curve $C$ over a field $K$, we will consider elements of $K(C)$ as rational maps $C \to \mbb{P}_K^1$ where $\mbb{P}_K^1$ is the projective line over $K$.
Elements of $K(C)$ which induce constant maps correspond to elements which are algebraic over $K$.
We will also consider the \emph{genus} of such curves as described in \cite[Section II.5]{Silverman_EllCurves}.
The genus of a function field in one variable over a perfect base field is equal to the genus of any smooth projective curve of which it is the function field (see \cite[Appendix B.10]{Sti09}).

For integers $\mf g$, $d$ and $p$, define
$$ M(\mf g, d, p) = \left\lceil \frac{1}{d} \left( 4 \mf g + 12 + 8 \sum_{j=1}^{\lceil (d-1)/2 \rceil} p^j \right) \right\rceil.$$
The following statement was shown in \cite[Theorem 1.6]{Pasten_FrobeniusOrbits}.

\begin{thm}\label{thm:Frob-orbit-p}
Let $\mf g \geq 0$ and $d \geq 1$ be integers and let $p > 2$ be prime.
Set $M = M(\mf g, d, p)$.

Let $K$ be a field of characteristic $p$ and let $F$ be a function field in one variable of genus at most $\mf g$ defined over $K$.
Let $F_1, \ldots, F_M \in \ff_p[X]$ be pairwise coprime irreducible polynomials of degree $d$.
Let $f, g \in F$ not both constant.
The following are equivalent:
\begin{itemize}
\item There is $s \geq 0$ such that $f = g^{p^s}$ or $g = f^{p^s}$.
\item For all $j = 1, \ldots, M$ there exists $h_j \in F$ with $F_j(f)F_j(g) = h_j^2$.
\end{itemize}
\end{thm}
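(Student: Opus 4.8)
The plan is as follows. \textbf{Setup and the easy direction.} First I would extend the base field to its algebraic closure; this can only decrease the genus, squares stay squares, and the relations $f=g^{p^s}$, $g=f^{p^s}$ are neither created nor destroyed, so we may assume $K=\overline K$. Then $F=\overline K(C)$ for a smooth projective geometrically integral curve $C/\overline K$ with $g(C)\le\mf g$, a function is constant exactly when it lies in $\overline K$, and every element of $\overline K$ --- in particular every value $F_j(c)$, $c\in\overline K$ --- is a square in $F$. The implication ``$\Rightarrow$'' is then immediate from the Frobenius identity $F_j(h^{p^s})=F_j(h)^{p^s}$, valid since $F_j\in\ff_p[X]$ and $\car F=p$: if $f=g^{p^s}$ then $F_j(f)F_j(g)=F_j(g)^{p^s+1}$, a square as $p^s+1$ is even (and $=0$ if $F_j(g)=0$); the case $g=f^{p^s}$ is symmetric.

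\textbf{Reductions for the converse.} Assume all $F_j(f)F_j(g)$ are squares. If $f,g$ are both non-constant, write $f=f_0^{p^a}$, $g=g_0^{p^b}$ with $f_0,g_0$ not $p$-th powers in $F$ and $a,b$ maximal; using $F_j(h^{p^a})=F_j(h)^{p^a}$ and $2\mid p^a-1$ one checks that ``$F_j(f)F_j(g)$ is a square'' is equivalent to ``$F_j(f_0)F_j(g_0)$ is a square'', and --- by comparing $p$-adic valuations of exponents, since a $p$-th power of a non-$p$-th-power is never a non-$p$-th-power --- that ``$\exists s\colon f=g^{p^s}$ or $g=f^{p^s}$'' is equivalent to ``$f_0=g_0$''. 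So I may assume $f,g$ are not $p$-th powers, hence $f,g\notin F^p\overline K$, so $f,g\colon C\to\mathbb P^1$ are separable non-constant morphisms, and it suffices to prove $f=g$. If instead one of $f,g$, say $f=c\in\overline K$, is constant, then ``$f=g^{p^s}$ or $g=f^{p^s}$'' is impossible and I must derive a contradiction; here the hypothesis gives that $F_j(g_0)$ is a square for at least $M-1$ of the orbits $S_j:=V(F_j)\subset\overline{\ff_p}$ (each a single Frobenius orbit of size $d$), where $g=g_0^{p^b}$ with $g_0$ a non-$p$-th power, hence $g_0\colon C\to\mathbb P^1$ separable non-constant.

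\textbf{The heart: parities, ramification, intersection theory.} Being a square forces every valuation of $F_j(f)F_j(g)=\prod_{\alpha\in S_j}(f-\alpha)(g-\alpha)$ to be even. Unwinding this place by place --- using $v_P(F_j(f))=v_P(f-f(P))$ when $f(P)\in S_j$ is finite, and the analogous formulas at poles of $f,g$ and when $g(P)\in S_j$ --- and first disposing by a direct count of the situation where the orbit-independent conditions at the poles already force all but $\le 1$ index $j$ to fail, one arrives at: \emph{if $j$ is ``good'' and $e_P(f):=v_P(f-f(P))$ is odd with $f(P)\in S_j$, then $g(P)\in S_j$}, i.e.\ $g(P)=f(P)^{p^k}$ for a unique $k\in\{0,\dots,d-1\}$. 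Separability of $f$ lets Riemann--Hurwitz bound the total even ramification: with $r_j:=\#\{P\colon f(P)\in S_j,\ e_P(f)\text{ odd}\}$ and $n_f=\deg f$ one gets $\sum_j r_j\ge Mdn_f-2\deg\div(f)=Mdn_f-4g(C)+4-4n_f\ge Mdn_f-4\mf g+4-4n_f$. If all $M$ orbits were good, these $\sum_j r_j$ pairwise distinct places would all map under $(f,g)\colon C\to\mathbb P^1\times\mathbb P^1$ into $\bigcup_{k=0}^{d-1}\Phi_k$, $\Phi_k:=\{(x,x^{p^k})\}$ the $k$-th Frobenius graph. But $(f,g)(C)$ is none of the $\Phi_k$ (for $k=0$ as $f\ne g$, for $k\ge1$ as $g$ is not a $p$-th power), so by counting zeros of $g-f^{p^k}$ and, via the symmetry $g=f^{p^k}\Leftrightarrow f=g^{p^{d-k}}$ on $S_j$, of $f-g^{p^{d-k}}$, we bound $\#\{P\colon(f,g)(P)\in\Phi_k\}\le(n_f+n_g)\bigl(p^{\min(k,d-k)}+2\bigr)$; summing over $k$ caps the exponent at $\lceil(d-1)/2\rceil$ and gives
\[
Mdn_f-4\mf g+4-4n_f\ \le\ \sum_{k=0}^{d-1}\#\{P\colon(f,g)(P)\in\Phi_k\}\ \le\ (n_f+n_g)\Bigl(2d+1+2\textstyle\sum_{j=1}^{\lceil(d-1)/2\rceil}p^{j}\Bigr).
\]
Adding this to its $f\leftrightarrow g$ analogue and inserting $Md\ge 4\mf g+12+8\sum_{j=1}^{\lceil(d-1)/2\rceil}p^{j}$ forces $(n_f+n_g)(4\mf g+2)\le 8\mf g$, hence $n_f+n_g\le 1$ --- impossible, as $n_f,n_g\ge1$. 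The constant case is similar but easier: goodness of $j$ forces \emph{all} of $g_0^{-1}(S_j)$ to be even-ramified, so the different of $g_0$ has degree $\ge dn_{g_0}/2$ over $S_j$, whence at most $\tfrac{4\mf g+4}{d}$ orbits are good --- fewer than $M-1$.

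\textbf{The main obstacle} is uniformity: every estimate above must be arranged so that, after combination, exactly the threshold $M(\mf g,d,p)$ drops out. The term $\sum_{j=1}^{\lceil(d-1)/2\rceil}p^{j}$ traces to the pole orders of the Frobenius powers $f^{p^k}$, the coefficient $4\mf g$ to the Riemann--Hurwitz different bounds, and the additive constant to the slack in the inequalities and the need to treat $f=g^{p^s}$ and $g=f^{p^s}$ symmetrically; one also has to handle the non-generic situations with care (the behaviour at poles, orbits meeting the ramification loci, and the fact that only \emph{some} of the $M$ orbits are a priori good --- it is precisely this that allows the degree bound to collapse to $n_f+n_g\le1$). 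Pleasantly, the strengthening of ``square'' to ``even divisor whose half is principal'' is never needed: only the even-valuation consequence is used.
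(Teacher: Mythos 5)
The paper does not prove this statement itself: it is cited verbatim from \cite[Theorem~1.6]{Pasten_FrobeniusOrbits}, and the closest in-house model is the paper's proof of the $p=2$ analogue, \Cref{thm:Frob-orbit-2}, which explicitly follows Pasten's argument. Your sketch reproduces essentially the same strategy --- reduce to $K$ algebraically closed and $f,g$ non-$p$-th powers, use the square condition (via even valuations) to push odd-ramified fibres of $f$ onto Frobenius graphs, bound the number of such points by zero-counting $g - f^{p^k}$, and play this off against a Riemann--Hurwitz estimate. The only organizational difference worth noting: you fold the even-ramification excess directly into the Hurwitz lower bound (bounding $t_q \le (n_f - s_q)/2$ to get $\sum_j r_j \ge Md\,n_f - 4\mathfrak g + 4 - 4n_f$), whereas the paper counts $\sum_j\#f^{-1}(q_j)$ and separately uses $\min\{1, v_{\mathfrak p}^+\}\le \tfrac12 v_{\mathfrak p}^+$ outside the bad set; you also re-derive what the paper outsources to Pasten's Lemma~2.1, and your constant case plays the role of the paper's \Cref{L:Frob-single-lem}. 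A couple of small arithmetic slips in the sketch --- the coefficient in the Frobenius-graph bound is $(n_f+n_g)\bigl(d+1+2\sum p^j\bigr)$, not $2d+1$, since $\sum_{k=0}^{d-1}\bigl(1+p^{\min(k,d-k)}\bigr)=d+1+2\sum_{j\ge1}p^j$; and the final reduction to $(n_f+n_g)(4\mathfrak g+2)\le 8\mathfrak g$ implicitly uses $4S\ge 4(d-1)$ --- but both are harmless, and the overall plan is correct.
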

\noindent
The goal of this section is to prove the following variation for $p=2$.
\begin{thm}\label{thm:Frob-orbit-2}
Let $\mf g \geq 0$ and $d \geq 1$ be integers.
Set $M = M(\mf g, d, 2)$.

Let $K$ be a field of characteristic $2$ and let $F$ be a function field in one variable of genus at most $\mf g$ defined over $K$.
Let $F_1, \ldots, F_M \in \ff_2[X]$ be pairwise coprime irreducible polynomials of degree $d$.
Let $f, g \in F$ not both constant.
The following are equivalent:
\begin{itemize}
\item There is $s \geq 0$ such that $f = g^{2^s}$ or $g = f^{2^s}$.
\item For all $j = 1, \ldots, M$ there exists $h_j \in F$ with $F_j(f) + F_j(g) = F_j(f)F_j(g)(h_j^2 + h_j)$.
\end{itemize}
\end{thm}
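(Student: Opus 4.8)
The plan is to mimic the structure of the proof of \Cref{thm:Frob-orbit-p}, replacing the quadratic-torsor condition $F_j(f)F_j(g) = h_j^2$ with the Artin--Schreier condition $F_j(f) + F_j(g) = F_j(f)F_j(g)(h_j^2 + h_j)$, which is the characteristic-$2$ analogue of ``$1 + \frac{1}{F_j(f)F_j(g)}$ is a square'' rewritten additively. The direction ($\Rightarrow$) should be routine: if $f = g^{2^s}$, then in characteristic $2$ we have $F_j(f) = F_j(g^{2^s}) = F_j(g)^{2^s}$ for any $F_j \in \ff_2[X]$ (Frobenius is additive and fixes $\ff_2$), so $F_j(f) + F_j(g) = F_j(g)^{2^s} + F_j(g) = F_j(g)(F_j(g)^{2^s - 1} + 1)$, and one checks that $F_j(g)^{2^s-1} + 1$ lies in the image of the Artin--Schreier map $w \mapsto w^2 + w$ restricted appropriately; more precisely one wants $\frac{1}{F_j(f)} + \frac{1}{F_j(g)} = \frac{1}{F_j(g)}(1 + F_j(g)^{-(2^s-1)})$ to be of the form $h^2 + h$, which follows because $\mathrm{AS}(x) = x^2 + x$ satisfies $\mathrm{AS}(x) + \mathrm{AS}(x^2) + \cdots = x^{2^s} + x$ (a telescoping identity), so $F_j(g)^{-(2^s-1)} + 1 = F_j(g)^{-2^s}(1 + F_j(g)^{2^s - \cdots})$... the cleanest route is: divide the target relation by $F_j(f)F_j(g)$ to get $\frac{1}{F_j(g)} + \frac{1}{F_j(f)} = h_j^2 + h_j$, set $u = F_j(g)^{-1}$, and observe $\frac{1}{F_j(f)} = u^{2^s}$, so we need $u + u^{2^s} \in \mathrm{AS}(F)$, which holds with $h_j = u + u^2 + u^4 + \cdots + u^{2^{s-1}}$ since $\mathrm{AS}(h_j) = u + u^{2^s}$ telescopes. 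The symmetric case $g = f^{2^s}$ is identical.

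For the hard direction ($\Leftarrow$), I would run the same divisor-counting/genus argument as in Pasten's proof of \Cref{thm:Frob-orbit-p}. The condition ``$F_j(f)F_j(g)(h_j^2 + h_j) = F_j(f) + F_j(g)$ has a solution $h_j \in F$'' should be translated, using the theory of Artin--Schreier extensions of the function field $F$, into a statement that a certain divisor associated to $\frac{1}{F_j(f)} + \frac{1}{F_j(g)}$ (or to $F_j(f) + F_j(g)$ relative to $F_j(f)F_j(g)$) is ``Artin--Schreier trivial'', i.e.\ that the element lies in $\mathrm{AS}(F) = \{w^2 + w : w \in F\}$. The key local obstruction: for a place $v$ of $F$, $a \in \mathrm{AS}(F_v)$ forces $v(a) \geq 0$ or $v(a)$ odd and $\geq$ some bound — in fact the relevant fact is that if $v(a) < 0$ then $a \in \mathrm{AS}(F_v)$ only if $v(a)$ is even, in which case one can ``reduce'' $a$ modulo $\mathrm{AS}$ to decrease $|v(a)|$. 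So solvability of the Artin--Schreier equation globally imposes that at every place, the pole order of $\frac{1}{F_j(f)} + \frac{1}{F_j(g)}$ is controlled. Summing these local conditions over the $M$ places where things can go wrong, and using that $M = M(\mf g, d, 2)$ is chosen exactly so that $\sum_j \deg F_j = Md$ exceeds the genus-dependent bound coming from Riemann--Roch, forces $f$ and $g$ to have ``matching'' pole/zero structure. One then concludes, as in Pasten, that $f$ and $g$ generate the same subfield and differ by a power of Frobenius, i.e.\ $f = g^{2^s}$ or $g = f^{2^s}$.

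The main obstacle — and the reason this section exists separately — is precisely the place where Pasten's $p > 2$ argument breaks: in characteristic $2$ the local theory of Artin--Schreier extensions is genuinely different from Kummer theory, and the arithmetic of conductors is more delicate. Concretely, whereas for $p > 2$ the ramification of $F_j(f)F_j(g) = h_j^2$ at a place $v$ is detected simply by the parity of $v(F_j(f)F_j(g))$, for $p = 2$ one must track the Artin--Schreier conductor, which can jump and which interacts with wild ramification; the bound $8\sum_{j=1}^{\lceil(d-1)/2\rceil} p^j$ in $M(\mf g, d, p)$ with $p = 2$ must be re-derived with these conductor estimates rather than quoted from the $p>2$ case. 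I expect the bulk of the work to be: (i) establishing the correct local criterion for an element of a $2$-adic function field to lie in the image of $\mathrm{AS}$, together with a sharp estimate on how this constrains pole orders; and (ii) verifying that with this estimate, the same choice $M = M(\mf g, d, 2)$ still makes the global counting argument go through — i.e.\ that the constant $8$ and the sum over $j \leq \lceil (d-1)/2\rceil$ were already chosen generously enough to absorb the characteristic-$2$ conductor contributions. Once the local Artin--Schreier lemma is in hand, the global genus-theoretic part of the argument should parallel \cite[Theorem 1.6]{Pasten_FrobeniusOrbits} closely.
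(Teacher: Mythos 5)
Your plan to mimic Pasten's proof by replacing the Kummer condition with an Artin--Schreier condition is the right idea, and your forward-direction argument (the telescoping $h_j = u + u^2 + \cdots + u^{2^{s-1}}$ with $u = F_j(g)^{-1}$) is correct and essentially the same as the paper's, just made explicit rather than inductive. However, your discussion of the backward direction misidentifies what is actually delicate here, and this misdiagnosis would send you down the wrong path.

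The reason Pasten's $p>2$ theorem cannot be used as-is in characteristic $2$ is \emph{not} that the local theory of Artin--Schreier extensions involves wild ramification or jumping conductors. It is simply that the \emph{forward} direction of Pasten's statement fails: if $f = g^{2^s}$ with $s \geq 1$, then $F_j(f)F_j(g) = F_j(g)^{2^s+1}$, and $2^s+1$ is \emph{odd}, so this need not be a square. (For odd $p$, $p^s+1$ is even, which is why the Kummer version works there.) The Artin--Schreier condition is chosen to repair exactly this, and you never name this as the issue. For the backward direction, the only local input the paper uses is the elementary observation (its \Cref{L:Frobenius-trick}) that if $x = h^2 + h$ and $v$ is a $\zz$-valuation with $v(x) < 0$, then $v(x) = 2v(h)$ is even; there is no need for a ``sharp'' conductor estimate, and your proposed programme of re-deriving $M(\mf g, d, 2)$ from Artin--Schreier conductor theory is a red herring. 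The bound on the set $A$ comes from \cite[Lemma 2.1]{Pasten_FrobeniusOrbits} exactly as in the $p>2$ case, with $p$ instantiated to $2$; the Hurwitz/Riemann--Roch count and the numerology of $M(\mf g, d, 2)$ then go through verbatim once the above parity fact is in hand. (You also omit some small but necessary reductions in the paper's proof: passing to $K$ algebraically closed, eliminating the case where $f$ or $g$ is constant via a separate counting lemma, and replacing $f$ by a $2$-power root to ensure the map $f : C \to \mathbb{P}^1$ is separable so that Hurwitz applies.) The upshot is that the characteristic-$2$ case is, in the end, no harder than the odd case --- a fact your proposal obscures by invoking machinery that is never used.
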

For a field $K$ of characteristic $2$, let us denote the set of Artin-Schreier elements
$$ \mf a (K) = \lbrace h^2 + h \mid h \in K \rbrace.$$
We gather some basic observations in the following lemma, each of which is verified easily.
\begin{lem}\label{L:Frobenius-trick}
Let $K$ be a field of characteristic $2$. We have that
\begin{itemize}
\item $0 \in \mf a(K)$, and if $x, y \in \mf a(K)$, then also $x + y \in a(K)$,
\item for $x \in K$, we have $x \in \mf a (K)$ if and only if $x^2 \in \mf a (K)$,
\item if $v$ is any $\zz$-valuation on $K$ and $x \in \mf a (K)$, then either $v(x) \geq 0$ or $v(x)$ is even.
\end{itemize}
\end{lem}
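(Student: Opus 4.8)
The plan is to verify the three bulleted claims about the Artin--Schreier set $\mf a(K) = \{ h^2 + h \mid h \in K \}$ in turn, each being a short direct computation.

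\emph{First bullet.} That $0 \in \mf a(K)$ is witnessed by $h = 0$. For additivity, given $x = h_1^2 + h_1$ and $y = h_2^2 + h_2$, one computes
$$ x + y = h_1^2 + h_1 + h_2^2 + h_2 = (h_1 + h_2)^2 + (h_1 + h_2), $$
using that the Frobenius $z \mapsto z^2$ is additive in characteristic $2$; so $x + y = h^2 + h$ with $h = h_1 + h_2$, hence $x + y \in \mf a(K)$.

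\emph{Second bullet.} If $x \in \mf a(K)$, say $x = h^2 + h$, then $x^2 = h^4 + h^2 = (h^2)^2 + h^2 \in \mf a(K)$, again by additivity of Frobenius. Conversely, suppose $x^2 \in \mf a(K)$; I would use that $\mf a(K)$ is closed under the inverse of Frobenius on the image, or more concretely argue as follows. Write $x^2 = h^2 + h$; then $x^2 - h^2 = h$, so $h = (x - h)^2$ is a square, and more usefully note that $x^2 + h^2 = (x+h)^2 = h$, whence $h$ is a square in $K$ and we may write $h = k^2$ with $k \in K$ (taking $k = x + h$). Then $x^2 = h^2 + h = k^4 + k^2 = (k^2 + k)^2$, so $x^2 = (k^2+k)^2$, and since squaring is injective in characteristic $2$ we get $x = k^2 + k \in \mf a(K)$. (Equivalently, one invokes that $u \mapsto u^2$ is a field automorphism onto its image $K^2$, and $\mf a(K) \cap K^2 = \mf a(K)^2 = \mf a(K^2)$; I would phrase whichever is cleanest.)

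\emph{Third bullet.} Let $v$ be a $\zz$-valued valuation on $K$ and $x = h^2 + h \in \mf a(K)$ with $x \neq 0$. If $v(h) \geq 0$ then $v(h^2) \geq 0$ and $v(h) \geq 0$, so $v(x) = v(h^2 + h) \geq 0$. If $v(h) < 0$, then $v(h^2) = 2 v(h) < v(h) < 0$, so the two terms $h^2$ and $h$ have distinct valuations and the ultrametric inequality gives the equality $v(x) = v(h^2 + h) = \min\{2v(h), v(h)\} = 2v(h)$, which is even. In either case $v(x) \geq 0$ or $v(x)$ is even, as claimed.

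No step here is a genuine obstacle; the only point requiring a moment's care is the converse direction of the second bullet, where one must extract a square root of the relevant element before applying injectivity of Frobenius, but this is immediate in characteristic $2$.
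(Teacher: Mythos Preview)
Your proof is correct and is exactly the kind of direct verification the paper has in mind: the paper does not spell out a proof at all, only remarking that each item ``is verified easily''. Your computations for all three bullets (including the only mildly non-trivial converse in the second bullet, where you extract a square root via $h=(x+h)^2$ and then use injectivity of Frobenius) are clean and complete.
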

\begin{lem}\label{L:Frob-single-lem}
Let $K$ be an algebraically closed field of characteristic $2$.
Let $C$ be a smooth projective curve of genus $\mf g$ over $K$ and $f \in K(C) = F$ non-constant.
Let $F_1, \ldots, F_r \in k[X]$ be separable, pairwise coprime polynomials of degree $d$.
If $F_j(f)^{-1} \in \mf a (F)$ for each $1 \leq j \leq r$, then $r < \frac{1}{d}(4+4\mf g)$.
\end{lem}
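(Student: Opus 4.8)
\emph{Proof plan.} The idea is to convert the Artin--Schreier solvability hypothesis $F_j(f)^{-1}\in\mf a(F)$ into a statement about ramification of the morphism $f\colon C\to\mbb{P}^1_K$, and then to read off the bound from the Riemann--Hurwitz formula. First I would reduce to the case where $f$ is a separating element, i.e.\ $f\notin F^2$. Indeed, if $df=0$ then $f\in F^2$, say $f=f_1^2$ with $f_1\in F$; since $K$ is perfect we may write $F_j(f)=F_j(f_1^2)=G_j(f_1)^2$ for a unique $G_j\in K[X]$ of the same degree, which is again separable, and the $G_j$ remain pairwise coprime. Then $F_j(f)^{-1}=\bigl(G_j(f_1)^{-1}\bigr)^2$, which by \Cref{L:Frobenius-trick} lies in $\mf a(F)$ if and only if $G_j(f_1)^{-1}$ does. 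Iterating (and noting $f_1$ is still non-constant), we may assume $f\notin F^2$; as $K$ is perfect this makes $f$ a separating transcendental, so the induced finite morphism $f\colon C\to\mbb{P}^1_K$ of degree $n:=[F:K(f)]$ is separable. None of this affects the hypotheses on the $F_j$, and the conclusion does not mention $f$.

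Next, fix $j$ and write $F_j(f)^{-1}=h_j^2+h_j$ with $h_j\in F$. Since $K$ is algebraically closed, $F_j$ has $d$ distinct roots in $K$, and for each such root $a$ the fibre $f^{-1}(a)$ is non-empty. For $P\in f^{-1}(a)$ one has $v_P(f-a)=e_P$ (the ramification index of $f$ at $P$), while $v_P(f-a')=0$ for the other roots $a'$ of $F_j$; hence $v_P(F_j(f))=e_P$ and $v_P(h_j^2+h_j)=-e_P<0$. By the third item of \Cref{L:Frobenius-trick}, $e_P$ must be even; in particular $e_P\geq 2$, and since $p=2\mid e_P$ the ramification of $f$ at $P$ is wild, so the different exponent at $P$ satisfies $d_P\geq e_P$ (the wild part forces $d_P>e_P-1$, and $d_P$ is an integer). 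Letting $S$ denote the set of points of $\mbb{P}^1_K$ lying under all these $P$'s, we have $|S|=rd$ because the $F_j$ are separable and pairwise coprime, and $f$ is wildly ramified at every point of every fibre $f^{-1}(a)$ with $a\in S$.

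Finally I would feed this into the Riemann--Hurwitz formula for the separable morphism $f\colon C\to\mbb{P}^1_K$:
\[
2\mf g-2 \;=\; -2n+\sum_{P} d_P \;\geq\; -2n+\sum_{a\in S}\;\sum_{P\in f^{-1}(a)} e_P \;=\; -2n+\sum_{a\in S} n \;=\; -2n+rdn,
\]
using $\sum_{P\in f^{-1}(a)} e_P=\deg f=n$ and $|S|=rd$. Rearranging gives $rdn\leq 2n+2\mf g-2$, hence $rd\leq 2+(2\mf g-2)/n$; since $n\geq 1$ this yields $rd\leq 1$ when $\mf g=0$ and $rd\leq 2\mf g$ when $\mf g\geq 1$. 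In either case $rd<4+4\mf g$, that is, $r<\tfrac1d(4+4\mf g)$.

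The only delicate points are the separability reduction in the first paragraph — without it Riemann--Hurwitz does not apply, and the inequality genuinely fails for inseparable $f$ (e.g.\ $f\in F^2$, where even ramification carries no information) — together with the elementary fact that wild ramification in characteristic $2$ contributes at least $e_P$ to the different at $P$. Everything else is a routine divisor computation, so I expect the (in)separability bookkeeping to be the main, albeit minor, obstacle.
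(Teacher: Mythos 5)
Your proof is correct, and it takes a genuinely different route from the paper. Both arguments begin with the same reduction to the case $f \notin F^2$, though you handle coefficients in an arbitrary perfect field $K$ (by constructing $G_j$ with $G_j(X)^2 = F_j(X^2)$), whereas the paper relies on $F_j \in \ff_2[X]$ and the identity $F_j(f)^2 = F_j(f^2)$. The divergence comes afterwards. The paper counts points: it estimates $\sum_{j} \#(F_j(f))^{-1}(0)$ from below using \cite[Prop.~II.2.6]{Silverman_EllCurves} together with the Hurwitz inequality $\sum_P (e_P - 1) \leq 2\deg(f) + 2\mf g - 2$, and from above using the observation that every zero of $F_j(f)$ has even, hence $\geq 2$, multiplicity; combining the two yields $rd < 4 + 4\mf g$. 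You instead go straight for the ramification divisor: the evenness of $e_P$ at points of $f^{-1}(S)$ forces wild ramification in characteristic $2$, so the different exponent satisfies $d_P \geq e_P$ there, and Riemann--Hurwitz in its different-divisor form gives $2\mf g - 2 \geq -2n + rdn$, i.e.\ $rd \leq 2 + (2\mf g - 2)/n$. This is sharper than the paper's bound (you get roughly $rd \leq 2\mf g$ rather than $rd < 4 + 4\mf g$), at the cost of invoking the slightly deeper fact $d_P \geq e_P$ for wild ramification rather than the universally valid $d_P \geq e_P - 1$; both are standard. Two cosmetic points: for $n = 1$ and $\mf g = 0$ your inequality reads $rd \leq 0$, which is simply a contradiction showing the hypothesis cannot hold in that case (your ``$rd \leq 1$ when $\mf g = 0$'' implicitly absorbs this), and for $\mf g \geq 1$ one necessarily has $n \geq 2$, so the bound $rd \leq 2\mf g$ is attained only as a formal upper envelope over $n \geq 1$. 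Neither affects the correctness of the conclusion.
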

\begin{proof}
The proof idea is exactly as in \cite[Lemma 2.2]{Pasten_FrobeniusOrbits}.
Since $F_j(f)^2 = F_j(f^2)$ and in view of \Cref{L:Frobenius-trick}, we may without loss of generality assume that $f$ is not a square in $F$.
Consequently, $f : C \to \mbb P^1_K$ is a separable non-constant morphism.

Denote by $q_1, \ldots, q_n$ the different roots of the polynomials $F_j$; by assumption we have $n = rd$.
For a closed point $\mf p \in C$, denote by $e_f(\mf p)$ the ramification index of $\mf p$ under $f$.
We compute that
\begin{align*}
\sum_{j=1}^r \# (F_j(f))^{-1}(0) &= \sum_{i=1}^n \# f^{-1}(q_i) = \sum_{i=1}^n \left(\deg(f) - \sum_{\mf p \in f^{-1}(q_i)} (e_f(\mf p) - 1)\right) \\
&= n\deg(f) - \sum_{i=1}^n \sum_{\mf p \in f^{-1}(q_i)} (e_f(\mf p) - 1) \\
&\geq n\deg(f) - \sum_{P \in C} (e_{f}(\mf p) - 1) \geq (n-2)\deg(f) + 2 - 2\mf g,
\end{align*}
where the second equality uses \cite[Proposition II.2.6]{Silverman_EllCurves} and the last inequality uses the Hurwitz formula \cite[Theorem II.5.9]{Silverman_EllCurves}.
On the other hand, by \Cref{L:Frobenius-trick}, any zero of $F_j(f)$ must have multiplicity at least $2$, so
$$ \sum_{j=1}^r \# (F_j(f))^{-1}(0) \leq \frac{1}{2}\sum_{j=1}^r \deg(F_j(f)) = \frac{rd}{2}\deg(f)$$
from which we deduce that
$$ rd \leq 4 + \frac{4\mf g - 4}{\deg(f)} < 4 + 4\mf g$$
as desired.
\end{proof}

\begin{proof}[Proof of \Cref{thm:Frob-orbit-2}.]
First suppose that $f = g^{2^s}$ for some $s \in \nat$.
Note that $f$ and $g$ are then both not constant, so that $F_j(f) \neq 0$ for all $j = 1, \ldots, M$.
Since for $j \in \nat$ we have that $F_j(g)^{-1} + F_j(g)^{-1} = 0 \in \mf a (K)$ and $F_j(g)^{-1} + F_j(g^2)^{-1} = F_j(g)^{-1} + F_j(g)^{-2} \in \mf a (F)$, one sees by induction on $s$, using \Cref{L:Frobenius-trick}, that $F_j(f)^{-1} + F_j(g)^{-1} \in \mf a (K)$. Multiplying by $F_j(f)F_j(g)$, we obtain the existence of $h_j \in F$ with $F_j(f) + F_j(g) = F_j(f)F_j(g)(h_j^2 + h_j)$.
By symmetry, the same conclusion can be derived when $g = f^{2^s}$ for some $s \in \nat$.
This concludes the proof of one implication.

For the converse, we may assume without loss of generality that $K$ is algebraically closed (base changing to an algebraic extension cannot increase the genus \cite[Chapter V, Theorem 5]{Chevalley-Functions}).
Using \Cref{L:Frob-single-lem} we may reduce to the situation where $f$ and $g$ are both non-constant.
The hypotheses then imply that $F_j(f)$ and $F_j(g)$ are non-zero, and $F_j(f)^{-1} + F_j(g)^{-1} \in \mf a (F)$ for all $j \in \{ 1, \ldots, M \}$.
We may further assume that $f$ and $g$ are both non-squares: if we had for example $f = \tilde{f}^2$ for some $\tilde{f} \in F$, then $F_j(f) = F_j(\tilde{f})^2$ for all $j \in \{ 1, \ldots, M \}$, and by \Cref{L:Frobenius-trick} we obtain $F_j(\tilde{f})^{-1} + F_j(g)^{-1} \in \mf a (F)$, whereby we may replace $f$ by $\tilde{f}$.
After these reductions, it remains to show that $f = g$.
For the sake of arriving at a contradiction, let us assume $f \neq g$.
We will follow the argument of \cite[Theorem 1.6]{Pasten_FrobeniusOrbits}.

Fix a smooth projective curve $C$ such that $F = K(C)$.
As before, we consider $f, g$ as functions $C \to \mbb P^1_K$.
Without loss of generality $\deg(f) \geq \deg(g)$.
Denote by $S_j \subseteq k$ the set of roots of $F_j$.
By the hypotheses on the polynomials $F_j$, the sets $S_j$ for $j=1, \ldots, M$ are pairwise disjoint and each contain $d$ elements.
Set $N = dM$ and let $q_1, \ldots, q_N$ denote the different elements of $\cup_j S_j$.
As in the proof of \Cref{L:Frob-single-lem}, considering $f$ as a separable non-constant morphism $C \to \mbb P^1_K$, Hurwitz' formula yields that
$$ \sum_{j=1}^N \# f^{-1}(q_j) \geq (N-2)\deg(f) + 2 - 2\mf g.$$
Let $P$ be the set of closed points of $C$ where $f$ or $g$ has a pole, set
$$A = \lbrace \mf p \in C \mid \exists i \in \lbrace 1, \ldots, M \rbrace : (f(\mf{p}), g(\mf{p})) \in S_i \times S_i \rbrace $$
and let $B = A \cup P$.
For $\mf p \in C$ and $x \in F$, denote by $v_{\mf p}(x)$ the order of the root/pole of $x$ at $\mf p$, and $v_{\mf p}^+(x) = \max \lbrace v_{\mf p}(x), 0 \rbrace$.
We compute that
\begin{align*}
\sum_{j=1}^N \# f^{-1}(q_j) &= \sum_{j=1}^N \sum_{\mf p \in C} \min \lbrace 1, v_{\mf p}^+(f - q_j) \rbrace = \sum_{\mf p \in C} \sum_{j=1}^N \min \lbrace 1, v_{\mf p}^+(f - q_j) \rbrace \\
&\leq \sum_{\mf p \in C \setminus B} \sum_{j=1}^N \min \{ 1, v_{\mf p}^+(f - q_j)) \} + \# A + \# P;
\end{align*}
the last inequality follows because for $\mf p \in B$ the value of $\sum_{j=1}^N \min \{ 1, v_{\mf p}^+(f - q_j)) \}$ is at most $1$.
We have $\# P \leq \deg(f) + \deg(g) \leq 2\deg(f)$, and by \cite[Lemma 2.1]{Pasten_FrobeniusOrbits} we also obtain
$$ \# A \leq \left( 2 + 4 \sum_{j=1}^{\lceil (d-1)/2 \rceil} 2^j \right) \deg(f).$$
We now observe that, for $j \in \lbrace 1, \ldots, N \rbrace$ and $\mf p \in C \setminus B$, $v_{\mf p}(f-q_j) \neq 1$.
Indeed, suppose that $v_{\mf p}(f - q_j) = 1$ and $q_j \in S_i$.
The choice of our set $A$ implies that $v_{\mf p}(g - q) = 0$ for all $q \in S_i$, and we also have $v_{\mf p}(f - q) = 0$ for $q \in S_i \setminus \lbrace q_j \rbrace$.
We then compute that
$$ v_{\mf p}(F_i(f)^{-1} + F_i(g)^{-1}) = v_{\mf p}(\prod_{q \in S_i} (f - q)^{-1} + \prod_{q \in S_i} (g- q)^{-1}) = -1,$$
but the hypothesis implies that $F_i(f)^{-1} + F_i(g)^{-1} \in \mf a (F)$, contradicting the third part of \Cref{L:Frobenius-trick}.
We conclude that indeed, for $j \in \lbrace 1, \ldots, N \rbrace$ and $\mf p \in C \setminus B$, $v_{\mf p}(f-q_j) \neq 1$.
From this and the definition of the set $P$ we obtain that, for $j \in \lbrace 1, \ldots, N \rbrace$ and $\mf{p} \in C \setminus B$, $\min \lbrace 1, v_{\mf p}^+(f - q_j) \rbrace \leq \frac{1}{2}v_{\mf p}^+(f - q_j)$.
Putting all of our bounds together, we compute the following:
\begin{align*}
(N-2)\deg(f) + 2 - 2\mf g &\leq \sum_{j=1}^N \# f^{-1}(q_j) \\
&\leq \sum_{\mf p \in C \setminus B} \sum_{j=1}^N \min \{ 1, v_{\mf p}^+(f - q_j)) \} + \# A + \# P \\
&\leq \frac{1}{2}\sum_{\mf p \in C \setminus B} \sum_{j=1}^N v_{\mf p}^+ (f - q_j) + \left( 4 + 4 \sum_{j=1}^{\lceil (d-1)/2 \rceil} 2^j \right)\deg(f) \\
&\leq \left(\frac{N}{2} + 4 + 4 \sum_{j=1}^{\lceil (d-1)/2 \rceil} 2^j \right)\deg(f),
\end{align*}
where in the last step we used that $\sum_{\mf p \in C} v_{\mf p}^+(f - q_j) = \deg(f - q_j) = \deg(f)$ for all $j$.
Using that $N = dM$ and rearranging, this yields
$$ M < \frac{1}{d}\left( 4\mf g + 12 + 8\sum_{j=1}^{\lceil (d-1)/2 \rceil} 2^j\right),$$
finally contradicting the assumption on $M$.
We infer that $f = g$ as desired.
\end{proof}

\section{Existentially defining the orbit of the Frobenius endomorphism}\label{S:def-Frobenius}
We now complete the proof of \Cref{TI:Frobenius} (\Cref{T:define-Pn} below).
\begin{lem}\label{L:enough-irreducibles}
Let $\mf g \geq 0$ and let $d \geq 2\log_2(16 + \sqrt{8\mf g + 248})$ be a prime number.
For any prime number $p$ there exist $M(\mf g, d, p)$ pairwise coprime monic irreducible degree $d$ polynomials over $\ff_p$.
\end{lem}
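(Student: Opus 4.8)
The plan is to show that $\ff_p$ carries at least $M(\mf g, d, p)$ monic irreducible polynomials of degree $d$; since distinct monic irreducible polynomials are automatically pairwise coprime, this is exactly what is asked. Recall (Gauss's formula) that the number of monic irreducible polynomials of degree $d$ over $\ff_p$ equals $\frac{1}{d}\sum_{e \mid d}\mu(e)\,p^{d/e}$, which for prime $d$ simplifies to $\frac{p^d - p}{d}$. The hypothesis forces $d \geq 2\log_2(16 + \sqrt{248}) > 2$, so $d$ is an odd prime; in particular $\lceil (d-1)/2 \rceil = (d-1)/2$. Since $\frac{p^d-p}{d}$ is an integer and $M(\mf g, d, p)$ is by definition the ceiling of $\frac{1}{d}\big(4\mf g + 12 + 8\sum_{j=1}^{(d-1)/2} p^j\big)$, it therefore suffices to establish the estimate
\[
p^d - p \;\geq\; 4\mf g + 12 + 8\sum_{j=1}^{(d-1)/2} p^j .
\]

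To prove this I would bound the geometric sum crudely, using $p \geq 2$: one has $\sum_{j=1}^{(d-1)/2} p^j < \frac{p^{(d+1)/2}}{p-1} \leq 2 p^{(d-1)/2}$. Hence it is enough to check $p^d - p \geq 4\mf g + 12 + 16\,p^{(d-1)/2}$. Writing $v := p^{(d-1)/2}$, so that $p^d = p v^2$ and $p \leq v$ (here $d \geq 3$ is used), this in turn follows from the quadratic inequality $p v^2 - 17 v - (4\mf g + 12) \geq 0$; and since $p \geq 2$ it is enough that $2v^2 - 17 v - (4\mf g + 12) \geq 0$, i.e.\ that $v \geq \tfrac14\big(17 + \sqrt{385 + 32\mf g}\big)$.

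It remains to feed in the hypothesis on $d$. The condition $d \geq 2\log_2\!\big(16 + \sqrt{8\mf g + 248}\big)$ is equivalent to $2^{d/2} \geq 16 + \sqrt{8\mf g + 248}$, so that $v = p^{(d-1)/2} \geq 2^{(d-1)/2} = 2^{d/2}/\sqrt{2} \geq \tfrac{1}{\sqrt 2}\big(16 + \sqrt{8\mf g + 248}\big)$, and a direct comparison — clearing $\sqrt 2$ and squaring the two radicals, which comes down to the manifestly true inequalities $64 \geq 17\sqrt 2$ and $128\mf g + 3968 \geq 64\mf g + 770$ — gives $\tfrac{1}{\sqrt 2}\big(16 + \sqrt{8\mf g + 248}\big) \geq \tfrac14\big(17 + \sqrt{385 + 32\mf g}\big)$ for every $\mf g \geq 0$. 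This closes the chain. The only real content is this string of elementary estimates; conceptually nothing here is subtle, and the stated bound on $d$ is generous enough that every inequality along the way has room to spare — the main things to be careful about are the bookkeeping with the ceiling in the definition of $M$ and the parity of $d$.
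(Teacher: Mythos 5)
Your proof is correct, and the overarching strategy is the same as the paper's: for $d$ prime the number of monic irreducible degree-$d$ polynomials over $\ff_p$ is exactly $\frac{p^d-p}{d}$, distinct monic irreducibles are automatically pairwise coprime, and since this count is an integer it suffices to clear the ceiling in $M(\mf g, d, p)$ and verify $p^d - p \geq 4\mf g + 12 + 8\sum_{j=1}^{(d-1)/2}p^j$ by elementary estimates. Where you differ is in treating all primes $p$ uniformly: by bounding the geometric sum by $2p^{(d-1)/2}$, substituting $v = p^{(d-1)/2}$, and using $p \geq 2$ together with $p \leq v$ (here $d \geq 3$, which the hypothesis comfortably guarantees), you reduce everything to the single quadratic inequality $2v^2 - 17v - (4\mf g + 12) \geq 0$, which you then check directly against the bound $2^{d/2} \geq 16 + \sqrt{8\mf g + 248}$. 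The paper instead only writes out $p = 2$ and defers the case $p \geq 3$ to \cite{Pasten_FrobeniusOrbits}; your unified reduction-to-$p=2$ is arguably cleaner. It is also worth noting that the paper's displayed chain for $p=2$ appears to have lost a factor $\tfrac{1}{d}$ in the middle: the step $\frac{2^{d-1}}{d} + 1 > (4\mf g - 4 + 16\cdot 2^{d/2}) + 1$ does not follow from $2^{d-1} > 4\mf g - 4 + 16\cdot 2^{d/2}$; the chain goes through once the right-hand quantities are divided by $d$, so the underlying estimate is fine, but your version makes this bookkeeping explicit and avoids the slip.
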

\begin{proof}
See \cite[109]{Pasten_FrobeniusOrbits} for details of the computation in the case $p \geq 3$; we sketch the steps for the computation when $p=2$.

By elementary algebra, the assumption $d \geq 2\log_2(16 + \sqrt{8\mf g + 248})$ is easily seen to be equivalent to
$$ 2^{d-1} > 4\mf g - 4 + 16 \cdot 2^{\frac{d}{2}}$$
and in particular implies $d \geq 10$.
From this we compute that
\begin{align*}
\frac{2^d - 2}{d} &\geq \frac{2^{d-1}}{d} + 1 > (4\mf g - 4 + 16 \cdot 2^\frac{d}{2}) + 1 \\
&\geq \left(4\mf g +12 + 8 \sum_{j=1}^{\lfloor (d-1)/2 \rfloor} 2^j \right) + 1 > M(\mf g, d, 2).
\end{align*}
Since $d$ is prime, $\frac{2^d - 2}{d}$ is precisely the number of monic irreducible degree $d$ polynomials over $\ff_2$: indeed, every irreducible degree $d$ polynomial over $\ff_2$ has $d$ distinct roots in $\ff_{2^d} \setminus \ff_2$, and conversely, every element of $\ff_{2^d} \setminus \ff_2$ is the root of a unique monic irreducible degree $d$ polynomial over $\ff_2$.
Thus the desired statement follows.
\end{proof}

\begin{lem}\label{L:define-Frob-assymetric}
Let $\mf g \geq 0$.
There exists a positive-existential $\Lar$-formula $\phi_{\mf g}(x, y)$ such that for every field $K$ with $\charac(K) = p > 0$, for every function field in one variable $F/K$ of genus at most $\mf g$, and for any $f, g \in F$, one has the following.
\begin{itemize}
\item If $F \models \phi_{\mf g}(f, g)$, then either $f$ and $g$ are algebraic over $K$, or there exists $s \in \nat$ such that either $f = g^{p^s}$ or $g = f^{p^s}$.
\item If there exists $s \in \nat$ such that either $f = g^{p^s}$ or $g = f^{p^s}$, then $F \models \phi_{\mf g}(f, g)$
\end{itemize}
\end{lem}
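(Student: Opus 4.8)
The plan is to synthesise $\phi_{\mf g}$ from the two characterisations of the Frobenius orbit given by \Cref{thm:Frob-orbit-p} and \Cref{thm:Frob-orbit-2}. The only genuine obstacle is uniformity: the number $M(\mf g, d, p)$ of polynomials occurring in those theorems depends on the characteristic $p$, while $\phi_{\mf g}$ must be a single $\Lar$-formula depending only on $\mf g$. I would get around this by treating finitely many small characteristics individually and using a $p$-independent family of polynomials in all large characteristics. Fix once and for all a prime $d \geq 2\log_2(16 + \sqrt{8\mf g + 248})$ and set $B := 4\mf g + 12$. For each prime $p \leq B$, \Cref{L:enough-irreducibles} supplies explicit pairwise coprime monic irreducible polynomials $F^{(p)}_1, \dots, F^{(p)}_{M(\mf g, d, p)} \in \ff_p[X]$ of degree $d$; reading their coefficients — which lie in the prime subfield $\ff_p \subseteq F$ — as the evident $\Lar$-terms $1 + \cdots + 1$, define the positive-existential $\Lar$-formula
$$ \psi_p(x,y) \;:\equiv\; \bigwedge_{j=1}^{M(\mf g,d,p)} \exists h_j\, \big(F^{(p)}_j(x)\, F^{(p)}_j(y) = h_j^2\big) \qquad (p \text{ odd}), $$
and, for $p = 2$, the analogous formula with the Artin--Schreier condition $F^{(2)}_j(x) + F^{(2)}_j(y) = F^{(2)}_j(x)\, F^{(2)}_j(y)\,(h_j^2 + h_j)$ in place of each square condition. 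As $d$ and $B$ depend only on $\mf g$, so do all of the $\psi_p$.

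For the infinitely many primes $p > B$ I would use degree-one polynomials instead: since $M(\mf g, 1, p) = 4\mf g + 12$ does not depend on $p$, and $p > B$ ensures that $X,\, X-1,\, \dots,\, X - (4\mf g + 11)$ are pairwise coprime irreducible of degree $1$ over $\ff_p$, \Cref{thm:Frob-orbit-p} applies with $d = 1$ and these $4\mf g + 12$ polynomials, which gives the single positive-existential $\Lar$-formula $\psi_\infty(x,y) :\equiv \bigwedge_{c=0}^{4\mf g + 11} \exists h_c\, \big((x-c)(y-c) = h_c^2\big)$. Now put
$$ \phi_{\mf g}(x,y) \;:\equiv\; \bigvee_{\substack{p \leq B \\ p \text{ prime}}} \Big( \big(\underbrace{1 + \cdots + 1}_{p} = 0\big) \wedge \psi_p(x,y) \Big) \;\vee\; \Big( \big(\exists z\,(N z = 1)\big) \wedge \psi_\infty(x,y) \Big), $$
where $N := \prod_{p \leq B,\, p \text{ prime}} p$. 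This is positive-existential, using that $a \neq 0$ in a field is positive-existentially expressed by $\exists z\,(az = 1)$; observe that $\exists z\,(N z = 1)$ holds in $F$ exactly when $\charac(F) > B$, and $\underbrace{1 + \cdots + 1}_{p} = 0$ exactly when $\charac(F) = p$, so in a given characteristic precisely one disjunct of $\phi_{\mf g}$ is satisfiable.

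It remains to verify the two bullets. Suppose $F \models \phi_{\mf g}(f,g)$, so that the disjunct matching $p := \charac(F)$ holds; unravelling it, the relevant square/Artin--Schreier system is satisfied by $(f,g)$, with $M(\mf g, d, p)$ (resp.\ $4\mf g + 12 = M(\mf g, 1, p)$) pairwise coprime irreducible polynomials of degree $d$ (resp.\ $1$). If $f$ and $g$ are not both algebraic over $K$ — equivalently, not both constant — then \Cref{thm:Frob-orbit-p} or \Cref{thm:Frob-orbit-2} produces $s \in \nat$ with $f = g^{p^s}$ or $g = f^{p^s}$; if they are both algebraic over $K$, the first alternative of the lemma holds outright. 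Conversely, I would not invoke those theorems for the other bullet, since they assume $f,g$ are not both constant, but argue directly: if $f = g^{p^s}$ then, the coefficients of $F^{(p)}_j$ lying in $\ff_p$, we have $F^{(p)}_j(f) = F^{(p)}_j(g)^{p^s}$, whence $F^{(p)}_j(f)\, F^{(p)}_j(g) = F^{(p)}_j(g)^{p^s + 1}$ is a square when $p$ is odd (as $p^s + 1$ is even), and when $p = 2$ one has $h_j^2 + h_j = F^{(2)}_j(g)^{-1} + F^{(2)}_j(g)^{-2^s} \in \mf a(F)$ for a suitable $h_j$ by telescoping and \Cref{L:Frobenius-trick}; the degenerate subcase $F^{(p)}_j(g) = 0$ forces $F^{(p)}_j(f) = 0$ as well and is met with $h_j = 0$. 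The case $g = f^{p^s}$ is symmetric, and $\psi_\infty$ behaves the same way, so $F \models \phi_{\mf g}(f,g)$.

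The main obstacle, as signalled, is forcing a single $\Lar$-formula to emulate a family of conditions whose size varies with the characteristic. The device that resolves it is that $M(\mf g, 1, p) = 4\mf g + 12$ is independent of $p$, so all but finitely many characteristics can be absorbed into the one fixed conjunction $\psi_\infty$, while the finitely many exceptional (small) characteristics are isolated by the positive-existentially definable conditions $\underbrace{1 + \cdots + 1}_{p} = 0$.
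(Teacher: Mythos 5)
Your proposal is correct and follows essentially the same route as the paper: split into finitely many small characteristics handled via degree-$d$ irreducibles from \Cref{L:enough-irreducibles}, and absorb all large characteristics into a single conjunction of degree-one conditions (exploiting that $M(\mf g, 1, p) = 4\mf g + 12$ is $p$-independent), gluing everything together by characteristic-detecting positive-existential formulas. The only notable deviations are cosmetic but slightly cleaner than the paper's write-up: you keep a separate family of polynomials $F^{(p)}_j \in \ff_p[X]$ for each small prime, reading coefficients as $1+\cdots+1$, whereas the paper lifts a single list to $\zz[X]$ via the Chinese Remainder Theorem (which tacitly requires enough irreducibles modulo every small $p$ to accommodate the largest $M(\mf g,d,p)$ — your per-prime bookkeeping sidesteps that subtlety entirely); and you explicitly check the second bullet by a direct computation (telescoping the Artin--Schreier sum, handling the case $F_j(g)=0$) rather than citing \Cref{thm:Frob-orbit-p} and \Cref{thm:Frob-orbit-2}, which is prudent since those theorems assume $f,g$ are not both constant while the second bullet of the lemma does not.
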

\begin{proof}

Fix a prime number $d \geq 2\log_2(16 + \sqrt{8\mf g + 248})$.
By \Cref{L:enough-irreducibles} combined with the Chinese Remainder Theorem, we may find monic degree $d$ polynomials $F_1, \ldots, F_M \in \zz[X]$ with $M = M(\mf g, d, p)$ such that, for every prime $p < 4\mf g + 12$, the reductions of $F_1, \ldots, F_m$ modulo $p$ are pairwise distinct irreducible polynomials.

Define $\phi_{\mf g}(x, y)$ to be 
$$(\phi^{(2)}_{\mf g} \wedge 2 = 0) \vee \bigvee_{p=3}^{4\mf g + 12} (\phi^{(p)}_{\mf g} \wedge p=0) \vee (\phi^{(0)}_{\mf g} \wedge \bigwedge_{p=2}^{4\mf g + 12}(p \neq 0))$$ 
where
\begin{align*}
\phi^{(2)}_{\mf g}(x,y) &:\quad
\bigwedge_{j=1}^M \exists h_j ( F_j(x) + F_j(y) = F_j(x)F_j(y)(h_j^2 + h_j)), \\
\phi^{(p)}_{\mf g}(x,y) &:\quad
\bigwedge_{j=1}^M \exists h_j ( F_j(x)F_j(y) = h_j^2 ), \\
\phi^{(0)}_{\mf g}(x,y) &:\quad
\bigwedge_{j=1}^{\mf 4\mf g + 12} \exists h_j ( (x - j)(y - j) = h_j^2 ).
\end{align*}
Note that $\phi_{\mf g}$ is positive-existential; the inequality $p \neq 0$ may be replaced by the equivalent $\exists z (z \cdot p = 1)$.

Observe that if $p > 4\mf g + 12 = M(\mf g, 1, p)$, then the polynomials $X - j$ for $j=1, \ldots, 4\mf g + 12$ are pairwise distinct and irreducible in $\ff_p$.
The formula $\phi_{\mf g}$ is thus as desired: for fields of characteristic $\geq 3$ this follows from \Cref{thm:Frob-orbit-p} (see \cite[109]{Pasten_FrobeniusOrbits}) and for fields of characteristic $2$ this similarly follows from \Cref{thm:Frob-orbit-2}.
\end{proof}

\begin{thm}\label{T:define-Pn}
Let $\mf g \geq 0$.
There exists a positive-existential $\Lar$-formula $\pi_{\mf g}(x, y, z)$ with the following property:
given any prime $p$, any field $K$ of characteristic $p$, any function field $F/K$ in one variable of genus $\leq \mf g$, and any $t \in F$ which is transcendental over $K$ and not a $p$-th power in $F$, we have that for any $f, g \in F$,
$$ F \models \pi_{\mf g}(f, g, t) \quad\Leftrightarrow\quad \exists s \geq 0 : f = g^{p^s}.$$
\end{thm}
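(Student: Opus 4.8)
The plan is to bootstrap the asymmetric Frobenius-orbit formula $\phi_{\mf g}(x,y)$ from \Cref{L:define-Frob-assymetric} into the desired formula $\pi_{\mf g}(x,y,z)$, which must do two things that $\phi_{\mf g}$ alone cannot: (i) rule out the spurious case where $f$ and $g$ are both algebraic over $K$, and (ii) break the symmetry, i.e.\ pin down that $f = g^{p^s}$ rather than merely $f = g^{p^s}$ \emph{or} $g = f^{p^s}$. The transcendental, non-$p$-th-power parameter $t$ is exactly what lets us do both. The rough shape of $\pi_{\mf g}(f,g,t)$ will be: assert $\phi_{\mf g}(f,g)$, together with auxiliary conjuncts involving $t$ that force $f$ to be a $p$-power of $g$ and not the other way around, and that exclude the constant case.

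**First I would** handle the symmetry-breaking and the algebraic case simultaneously using a clever substitution trick, following Pasten. The idea is that if $f = g^{p^s}$, then also $ft^{p^0} \cdot (\text{something})$\dots more precisely, one considers the pair $(f t, g t)$ or better $(f\cdot\text{const}, g)$-type modifications: since $t$ is transcendental and not a $p$-th power, $g^{p^s}$ being a $p^s$-th power forces a parity/degree constraint that $f$ itself need not satisfy unless $s \geq 0$ in the right direction. Concretely, apply $\phi_{\mf g}$ to the pair $(f, g)$ \emph{and} to a pair built from $f, g, t$ such as $(f t^{?}, g)$; the asymmetry in $\phi_{\mf g}$'s conclusion, combined with the fact that $t$ is not a $p$-th power, eliminates the branch $g = f^{p^s}$ with $s > 0$. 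To exclude $f, g$ both algebraic over $K$: if $f = g^{p^s}$ with $g$ algebraic over $K$, then $f$ is too, and then the relation $\exists s: f = g^{p^s}$ might fail to capture anything useful — but actually the statement we must prove is an equivalence for \emph{all} $f, g \in F$, so if $f$ and $g$ are algebraic over $K$ we still need $F \models \pi_{\mf g}(f,g,t) \Leftrightarrow \exists s: f = g^{p^s}$. The honest route is: use $t$ to express that $f$ and $g$ lie in a common finitely generated piece, or rather, reduce to the one-variable function field $K'(f, g, t)$ where $K'$ is the algebraic closure of $K$ in $F$, on which $\phi_{\mf g}$ applies (after bounding the genus) and now $t$ being transcendental guarantees the function field is genuinely transcendental, so \Cref{L:define-Frob-assymetric}'s dichotomy degenerates: either both are algebraic over the constants — handled by a separate elementary conjunct — or the Frobenius relation holds.

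**The main obstacle I expect** is the genus control under passing to the subfield $F_0 = K_0(f,g,t)$ where $K_0 = K \cap \overline{K}$ (or wherever $f, g, t$ naturally live), because \Cref{L:define-Frob-assymetric} requires an a priori genus bound $\mf g$, and the subfield $F_0 \subseteq F$ generated by $f, g, t$ need not have genus bounded by that of $F$ — subfields of function fields can have larger genus. The resolution is presumably to \emph{not} pass to a subfield but instead to apply $\phi_{\mf g}$ directly in $F$ (whose genus is $\leq \mf g$ by hypothesis), accepting the algebraic-over-$K$ ambiguity in its conclusion, and then use $t$ purely as a \emph{symmetry breaker and constant-excluder within $F$}: the conjunct involving $t$ must, inside $F$ itself, detect whether a putative Frobenius relation between $f$ and $g$ is oriented correctly. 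The delicate point is that $f$ or $g$ could be algebraic over $K$ while the other is not — but then no relation $f = g^{p^s}$ or $g = f^{p^s}$ can hold at all (a transcendental element is never a $p$-power of an algebraic one or vice versa), and $\phi_{\mf g}$'s first bullet only guarantees the \emph{conclusion} "both algebraic or Frobenius-related" under the hypothesis $F \models \phi_{\mf g}$; so I would add a conjunct asserting that $f$ and $g$ are simultaneously transcendental or simultaneously algebraic, expressible positive-existentially via $t$ by something like "$f$ is algebraic over $\ff_p$ iff $g$ is", using that algebraicity over the prime field is positive-existentially definable using the Frobenius (an element $a$ is in $\overline{\ff_p}$ iff $\exists s: a^{p^s} = a$, but that $s$ is not bounded — so instead use that being algebraic over $\ff_p(t)$-with-$t$-transcendental is detectable). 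Assembling these conjuncts so that no \emph{new} parameters beyond $t$ sneak in, and verifying the equivalence in all characteristic cases (including $p = 2$, now available via \Cref{thm:Frob-orbit-2}), is the real work; the logical skeleton is routine once the role of $t$ is correctly isolated.
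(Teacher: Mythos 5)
Your high-level strategy is the same as the paper's — apply $\phi_{\mf g}$ directly in $F$ (not in a subfield, exactly for the genus-control reason you identify) and use the transcendental non-$p$-th-power $t$ both to break the $f \leftrightarrow g$ symmetry and to handle the constant case. But the proposal stops short of the actual content, and the point where it stops is exactly where the difficulty lies, so this is a genuine gap rather than an omitted routine detail.

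The paper's formula is
$$\pi_{\mf g}(x,y,z) \;=\; \exists u\bigl(\phi_{\mf g}(u,z) \wedge \phi_{\mf g}(x,y) \wedge \phi_{\mf g}(ux,zy) \wedge \phi_{\mf g}(u(x+1),z(y+1))\bigr),$$
and the verification that it works requires an argument with divisors: from $\phi_{\mf g}(u,z)$ one extracts $u = t^{p^s}$, from $\phi_{\mf g}(x,y)$ one gets $g=f^{p^k}$ (or the other orientation), and then the last two conjuncts yield linear relations among $\div(t)$, $\div(f)$, $\div(f+1)$ that can only be consistent if $k=0$ (otherwise $f^m(f+1)^n$ would be constant for nonzero $m,n$, forcing $f$ constant). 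You wave at ``a pair built from $f,g,t$'' and ``auxiliary conjuncts involving $t$'' but never name them, and your claim that ``the logical skeleton is routine once the role of $t$ is correctly isolated'' is precisely backwards. The paper's own Remark following the theorem documents that Pasten's published formula — which had only the first three of the four conjuncts and is exactly the kind of naive symmetry-breaker your sketch gestures at — is \emph{wrong}: it is satisfied by $(f,g,t) = (t,t^p,t)$ with $u = t^p$, yet $t$ is not of the form $(t^p)^{p^s}$. The fourth conjunct $\phi_{\mf g}(u(x+1),z(y+1))$ is the fix, and discovering that one needs it, and why, is the real content of the theorem.

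Two secondary issues. First, you propose to add a conjunct expressing ``$f$ is algebraic over $\ff_p$ iff $g$ is'' — but an ``iff'' is not positive-existential, and even the one-sided version ``$f$ is constant'' is not obviously positive-existentially definable without parameters; you acknowledge the problem but do not resolve it, whereas the paper simply never needs such a conjunct. Second, your suggestion to handle the both-algebraic case by ``a separate elementary conjunct'' is not how it works: when $f$ and $g$ are both constant, the conclusion $\exists s\colon f = g^{p^s}$ is still substantive (it can fail, e.g.\ for distinct elements of $\overline{\ff_p}\cap K$), and the paper handles this case through the same $t$-involving conjuncts — when $f,g$ are constant, $tg$ is non-constant and comparing multiplicities of a zero of $t$ in $t^{p^s}f = (tg)^{p^e}$ forces $f = g^{p^s}$. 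There is no separate clause.
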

\begin{proof}
Define the formula $\pi_{\mf g}(x, y, z)$ to be
$$ \exists u \colon \phi_{\mf{g}}(u,z) \wedge \phi_{\mf{g}}(x,y) \wedge \phi_{\mf{g}}(ux, zy) \wedge \phi_{\mf g}(u(x+1),z(y+1)) $$
where $\phi_{\mf g}(x, y)$ is as in \Cref{L:define-Frob-assymetric}.

Let $F/K$ be a function field of genus $\leq \mf g$ over a field of characteristic $p$. We claim that, for $f, g, t \in K$ with $t$ not a $p$-th power, we have that
$$ K \models \pi_{\mf g}(f, g, t) \quad\Leftrightarrow\quad \exists s \geq 0 : f = g^{p^s},$$
which would show that $\pi_{\mf g}$ is as desired.
On the one hand, if $f = g^{p^s}$ for some $s \in \nat$, then $t^{p^s}f = (tg)^{p^s}$ and $t^{p^s}(f+1) = (t(g+1))^{p^s}$, so $F \models \pi_{\mf g}(f, g, t)$ (taking $t^{p^s}$ for $u$).

For the converse, assume that $F \models \pi_{\mf g}(f, g, t)$.
Since $t$ was assumed to be transcendental and not a $p$-th power, the properties of $\phi_{\mf g, p}$ imply that there exists $s \in \nat$ with $F \models \phi_{\mf g, p}(f, g) \wedge \phi_{\mf g, p}(t^{p^s}f, tg) \wedge \phi_{\mf g, p}(t^{p^s}(f+1), t(g+1))$.
If $f$ and $g$ are both constant, then $tg$ is not constant, and it follows from $F \models \phi_{\mf g, p}(t^{p^s}f, tg)$ that $f = g^{p^s}$ (e.g.~by comparing multiplicities of any zero of $t$).
Assume from now on that $f$ and $g$ are not both constant.
It follows from  $F \models \phi_{\mf g, p}(f, g)$ that $f$ and $g$ are then both non-constant and either $f = g^{p^k}$ for some $k \in \nat$, or $g = f^{p^k}$ for some $k \in \nat$.

Assume that $g = f^{p^k}$ for some $k \in \nat$; it remains to show that $k = 0$.
For an element $h \in F$, let us denote by $\div(h)$ the divisor of $h$.
The fact that $F \models \phi_{\mf g, p}(t^{p^s}f, tg) \wedge \phi_{\mf g, p}(t^{p^s}(f+1), t(g+1))$ implies that there exists $l_1, l_2 \in \zz$ (possibly negative) with $\div(t^{p^s}f) = p^{l_1}\div(tg)$ and $\div(t^{p^s}(f+1)) = p^{l_1}\div(t(g+1))$.
Using that $g = f^{p^k}$ and rearranging, we obtain
\begin{align*}
(p^s - p^{l_1})\div(t) &= (p^{k+{l_1}} - 1)\div(f) \quad\text{and} \\
(p^s - p^{l_2})\div(t) &= (p^{k+l_2} - 1)\div(f+1).
\end{align*}
If $s = l_1$, then since $\div(f) \neq 0$, this forces $k+l_1=0$, which (since both $k$ and $l_1 = s$ are non-negative) implies $k=l_1=s=0$, which we had to show.
A similar argument covers the case $s = l_2$.
So suppose now that $s \neq l_1$ and $s \neq l_2$.
It follows that there is a non-trivial $\zz$-linear combination of $\div(f)$ and $\div(f+1)$ equal to $0$, so that there exist non-zero $m \in \zz$ and $n \in \zz \setminus \lbrace 0 \rbrace$ such that $f^m(f+1)^n$ is constant; this is only possible when $f$ itself is constant, which we assumed not to be the case.
This concludes the proof of the claim. 
\end{proof}
\begin{opm}
\Cref{T:define-Pn} was proven in \cite[Theorem 1.5]{Pasten_FrobeniusOrbits} for $p > 2$ and under the slightly stronger hypothesis that $t$ is a uniformiser of some $K$-trivial valuation on $F$.
However, Philip Dittmann pointed out in conversation with Hector Pasten and the author that the formula presented there does not quite work: instead of $\pi_\mf g(x, y, z)$ as constructed in the proof of \Cref{T:define-Pn}, on \cite[110]{Pasten_FrobeniusOrbits} the formula
$$ \exists u \colon \phi_{\mf{g}}(u,z) \wedge \phi_{\mf{g}}(x,y) \wedge \phi_{\mf{g}}(ux, zy) $$
is proposed, which is satisfied by $(t, t^p, t)$ (taking $u = t^p$) and does thus not serve the desired purpose.
The modification of the definition of $\pi_{\mf g}$ in the proof of \Cref{T:define-Pn} resolves this issue.
\end{opm}

\begin{cor}\label{C:define-Pn}
Let $K$ be a field of characteristic $p > 0$, let $F/K$ be a function field, and fix any transcendental element $t \in F$.
There exists a positive-existential $\Lar$-formula $\pi(x, y, z)$ such that for all $f, g \in F$ one has
$$ F \models \pi(f, g, t) \quad\Leftrightarrow\quad \exists s \in \nat : f = g^{p^s}.$$
\end{cor}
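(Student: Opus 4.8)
The plan is to reduce \Cref{C:define-Pn} to \Cref{T:define-Pn}. That theorem already produces the desired formula, but only for function fields \emph{in one variable} and only when the parameter is transcendental and \emph{not} a $p$-th power; so two reductions are required: cutting the transcendence degree down to one, and replacing $t$ by a suitable root of it that is not a $p$-th power.

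For the first reduction, I would pick a transcendence basis $t, u_2, \ldots, u_n$ of $F/K$ containing $t$ and set $K_0 = K(u_2, \ldots, u_n) \subseteq F$. Then $F$ is generated over $K_0$ by the same finite set of elements as over $K$, and $\operatorname{trdeg}(F/K_0) = 1$, so $F/K_0$ is a function field in one variable over a field of characteristic $p$; let $\mf g$ be its genus (a finite integer). Note that $t$ is transcendental over $K_0$ and, since $F/K_0$ is finitely generated of transcendence degree one, $[F : K_0(t)] < \infty$.

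For the second reduction, observe that for each $i \geq 0$ with $t^{1/p^i} \in F$ the polynomial $X^{p^i} - t$ is irreducible over $K_0(t)$ (as $t$, being transcendental over $K_0$, is not a $p$-th power in $K_0(t)$), so $[K_0(t^{1/p^i}) : K_0(t)] = p^i$. Comparing with $[F : K_0(t)] < \infty$, there is a \emph{largest} integer $m \geq 0$ with $t^{1/p^m} \in F$. Let $\tau \in F$ be the unique element with $\tau^{p^m} = t$ (unique since the Frobenius endomorphism of $F$ is injective). Then $\tau$ is transcendental over $K_0$ and is not a $p$-th power in $F$, so \Cref{T:define-Pn} applies to $F/K_0$, the bound $\mf g$, and the parameter $\tau$.

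Finally, writing $\pi_{\mf g}$ for the formula furnished by \Cref{T:define-Pn}, I would set
$$ \pi(x, y, z) \ :\quad \exists w\,\big( w^{p^m} = z \ \wedge\ \pi_{\mf g}(x, y, w) \big), $$
which is again positive-existential (note $p^m$ is a fixed integer, so $w^{p^m} = z$ is a polynomial equation). Since $\tau$ is the only element of $F$ whose $p^m$-th power equals $t$, we get $F \models \pi(f, g, t)$ if and only if $F \models \pi_{\mf g}(f, g, \tau)$, i.e.\ if and only if $f = g^{p^s}$ for some $s \geq 0$. The only point that is not pure formalism is the second reduction --- i.e.\ knowing that $F$ cannot contain $p$-power roots of $t$ of arbitrarily high order --- which rests on the finiteness of $[F : K_0(t)]$ together with the degree computation above; the rest is bookkeeping with positive-existential formulas.
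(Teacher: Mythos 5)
Your proof is correct and follows the same two-step reduction the paper uses: first cut the transcendence degree to one by absorbing the other transcendence-basis elements into the base field, then pass from $t$ to the maximal $p$-power root $\tau=t^{1/p^m}$ and apply \Cref{T:define-Pn} via the formula $\exists w\,(w^{p^m}=z \wedge \pi_{\mf g}(x,y,w))$. You merely spell out a few details the paper leaves implicit (finiteness of $[F:K_0(t)]$ forcing the maximal $m$ to exist, and uniqueness of $\tau$ by injectivity of Frobenius), which is fine.
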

\begin{proof}
Let $d$ be the transcendence degree of $F/K$, let $t_1, \ldots, t_d$ be a transcendence basis of $F$ over $K$ with $t_d = t$.
Replacing $K$ by $K(t_1, \ldots, t_{d-1})$, we may reduce to the case where $F/K$ is a function field in one variable.

Let $\mf g$ be the genus of $F/K$.
Let $m \in \nat$ be maximal with the property that $t$ is a $p^m$-th power in $F$.
Let $\pi_{\mf g}$ be the formula from \Cref{T:define-Pn}.
It suffices to set $\pi(x, y, z)$ equal to
$\exists w (t = w^{p^m} \wedge \pi_{\mf g}(f, g, w))$.
\end{proof}

\section{A reduction theorem for undecidability}\label{S:undecidability}
With \Cref{T:define-Pn} in hand, we can show the following undecidability result for arbitrary function fields.
It reduces the problem of showing that $\Th_{\exists}(F, R)$ is undecidable for a function $F$ and a well-chosen ring of coefficients $R$, to the existential definability of some well-behaved subset.
The results of this section should be seen as a refinement (by keeping track of parameters) of the reduction technique developed in \cite[Lemma 1.5]{Shl96} and \cite[Section 2]{EisShlap17} and going back to the original work of Pheidas.

For a valuation $v$ on a field $F$, we denote by $\mc{O}_{v}$ the corresponding valuation ring, by $vF$ the value group, and by $Fv$ the residue field.
\begin{thm}\label{T:Pheidas}
Let $K$ be a field of characteristic $p > 0$, let $F$ be a function field in one variable over $K$.
Consider the first-order language $\La = \lbrace +, -, \cdot, 0, 1, t, \mc{O} \rbrace$, where $t$ is a constant symbol and $\mc{O}$ a unary relation symbol.
Consider $F$ as an $\La$-structure by interpreting $+, -, \cdot, 0, 1$ in the obvious way, and such that for some valuation $v$ on $F$, $t^F$ is transcendental over $K$, and
$$ \ff_p(t^F) \neq \mc{O}_{v} \cap \ff_p(t^F) \subseteq \mc{O}^F \subseteq \mc{O}_{v}.$$
Then the positive-existential $\La$-theory of $F$ is undecidable.
\end{thm}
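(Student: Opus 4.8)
The plan is to reduce the positive-existential theory of the structure $(\nat; 0, 1, +, \cdot)$, which is undecidable by the DPRM theorem (see \cite{Mat70}), to the positive-existential $\La$-theory of $F$, following the circle of ideas of Pheidas, Shlapentokh and Eisentr\"ager--Shlapentokh, the novelty being to keep track of the fact that the only parameters in play are the constant $t$ and the relation symbol $\mc O$. There are two inputs. First, applying \Cref{C:define-Pn} to the transcendental element $t^F$ yields a positive-existential $\Lar$-formula $\pi(x,y,z)$ so that the relation $R(f,g) :\equiv \exists s \ge 0\colon f = g^{p^s}$ is defined in $F$ by $\pi(x,y,t)$; in particular $R$ is positive-existentially $\La$-definable, and so is the set $T := \{(t^F)^{p^s} : s \ge 0\} = \{x \in F : R(x, t^F)\}$, on which $s \mapsto (t^F)^{p^s}$ is a bijection from $\nat$ since $t^F$ is transcendental over $K$. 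Second, the hypothesis $\ff_p(t^F) \ne \mc O_v \cap \ff_p(t^F)$ means that $v$ restricts to a non-trivial valuation on $\ff_p(t^F)$, i.e. the valuation ring of a place $\mathfrak p_0$; combined with $\mc O_v \cap \ff_p(t^F) \subseteq \mc O^F \subseteq \mc O_v$ this gives, for every $x \in \ff_p(t^F)$, the equivalence $x \in \mc O^F \Leftrightarrow v(x) \ge 0$, so that $\mc O$ provides positive-existential access to the order of the zero or pole at $\mathfrak p_0$ of an element of $\ff_p(t^F)$ --- even though $\mc O$ itself need not be a valuation ring on all of $F$.

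\textbf{The interpretation.}
With these tools I would positive-existentially interpret $(\nat; 0, 1, +, \cdot)$ in $F$ with domain $T$, sending $0 \mapsto t^F$ and $1 \mapsto (t^F)^p$. Addition comes from $R$ alone: for $x = (t^F)^{p^a}$, $y = (t^F)^{p^b}$ in $T$, the conjunction $R(z,x) \wedge R(z,y) \wedge R(zy,\, x t^F)$ defines on $T^3$ the graph of $(a,b)\mapsto a+b$, which one verifies by noting that $R(z,x)$ already puts $z = (t^F)^{p^c}$ in $T$ and then comparing base-$p$ expansions of the exponents forced by the three instances of $R$, using transcendence of $t^F$. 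Multiplication is the genuinely hard step; here one follows Pheidas, realizing the product of two exponents through the degree/pole behaviour at $\mathfrak p_0$ of suitable ``polynomial-like'' elements of $\ff_p(t^F)$ (those regular away from $\mathfrak p_0$), whose relevant data are cut out positive-existentially using $\mc O$ at $\mathfrak p_0$ together with the Frobenius scaling encoded in $R$, and whose defining congruences are plain divisibility statements and hence positive-existential. Once the graphs of $+$ and $\cdot$ on $T$ are shown positive-existentially $\La$-definable, any positive-existential $\{0,1,+,\cdot\}$-sentence about $\nat$ translates effectively into a positive-existential $\La$-sentence holding in $F$ if and only if it holds in $\nat$, and undecidability of the positive-existential $\La$-theory of $F$ follows.

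\textbf{Main obstacle.}
The main obstacle is precisely the multiplication step (together with the positive-existential description of the polynomial-like elements and their pole orders), carried out under three constraints that distinguish this treatment from its predecessors: every intermediate formula must be positive-existential; $\mc O$ may be used only through the weak sandwich property in the hypothesis, not as an honest valuation ring; and no parameter beyond $t$ and $\mc O$ may be introduced --- in particular one must resist naming a uniformiser of $v$ or witnesses for intermediate definitions, and any passage to an auxiliary finite extension (e.g. to accommodate a Pell-type family) has to be descended back to $F$ via \Cref{L:interpretation-argument}. Modulo this bookkeeping the argument is the classical one of \cite{PheidasHilbert10, Shl96, EisShlap17}.
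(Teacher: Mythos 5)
The proposal diverges from the paper's proof at the crucial choice of source theory, and the divergence contains a genuine gap. The paper does \emph{not} reduce from $(\nat; 0, 1, +, \cdot)$. It reduces from the much weaker structure $(\nat; 0, 1, +, \mid_p)$, where $n \mid_p m$ means $m = np^s$ for some $s \in \nat$, whose positive-existential theory Pheidas showed to be undecidable in \cite{Pheidas87}. The reason this choice is essential is visible in the translation table of the paper's proof: integers $n$ are encoded as elements $x \in F$ whose $v$-value records $n$, addition and equality of encoded integers become additivity of the valuation under field multiplication (read off via $\mc O$), and the only remaining predicate $\mid_p$ is handed to you directly by the Frobenius-orbit formula $\pi$ of \Cref{C:define-Pn}. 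At no point does one need to positive-existentially define the graph of multiplication of natural numbers in $F$, and the whole architecture is built around avoiding that task.

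Your proposal instead insists on reducing from full arithmetic via DPRM, and hence must define a multiplication relation on your exponential encoding $T = \{(t^F)^{p^s}\}$. Your addition formula on $T$ (a base-$p$ digit argument) looks workable, but the multiplication step is precisely the hard part, and the paragraph you devote to it does not supply an argument: "following Pheidas, realizing the product of two exponents through the degree/pole behaviour\dots" mischaracterizes Pheidas' method, since Pheidas does not define multiplication in the function field. The Pell-type devices you allude to belong to the characteristic-zero arguments of Denef and Kim--Roush; in characteristic $p$ the entire reason the $\mid_p$ predicate was introduced is that one does \emph{not} know how to carry out such a multiplication definition with the available parameters. As written, "Modulo this bookkeeping" hides the actual content of the theorem: the multiplication step is not bookkeeping but an unresolved (and, in the approach you sketch, likely unresolvable) obstacle. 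To repair the proof you should replace DPRM by Pheidas' \cite{Pheidas87}, switch to the valuation encoding of integers (which makes addition and equality reachable via $\mc O$ alone and avoids the digit-combinatorics on $T$), and use $\pi$ only to define the $\mid_p$ predicate.
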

\begin{proof}
In the language $\LaP^p = \lbrace 0, 1, +, \mid_p \rbrace$, consider $\nat$ as a structure by letting $+^{\nat}, 0^\nat, 1^\nat$ be the usual addition, zero element, and identity element, and defining for $n, m \in \nat^+$ the relation $n \mid_p^{\nat^+} m \Leftrightarrow m = np^s$ for some $s \in \nat$.
It was shown in \cite[Theorem 1]{Pheidas87} that the positive-existential $\LaP^p$-theory of $\nat$ is undecidable.
To prove the Theorem, it thus suffices to show that there is an effective interpretation of the positive-existential $\LaP^p$-theory of $\nat$ in the positive-existential $\La$-theory of $K$.

The assumption $\ff_p(t^F) \neq \mc{O}_{v} \cap \ff_p(t^F)$ implies that there exists $\tilde{t} \in \ff_p(t^F)$ with $v(\tilde{t}) > 0$.
By replacing $t^K$ with $\tilde{t}$, we may assume from now on that $v(t^F) > 0$.
Write $s = v(t^F)$.

Now consider an arbitrary positive-existential $\LaP^p$-sentence $\varphi$.
After applying formal manipulations, we may assume without loss of generality that $\varphi$ is of the form $\exists n_1, \ldots, n_l \psi$ where $\psi$ is a conjunction of unnested atomic formulas.
Let $\varphi'$ be the $\La$-sentence
$$\exists x_1, \ldots, x_l, y_1, \ldots, y_l, z (\psi' \wedge t \cdot z = 1 \wedge \bigwedge_{i=1}^l (x_i \cdot y_i = 1) \wedge \bigwedge_{i=1}^l \mc O (x_i))$$
where $\psi'$ is obtained from $\psi$ be replacing every atomic $\LaP^p$-formula in the conjunction by an atomic $\La$-formula according to the following rules:
\begin{align*}
n_i = n_j &\qraq \mc O (x_i \cdot y_j) \wedge \mc O (x_j \cdot y_i) \\
n_i = 0 &\qraq \mc O (y_i) \\
n_i = 1 &\qraq \mc O (x_i^s \cdot z) \wedge \mc O (y_i^s \cdot t) \\
n_i + n_j = n_k &\qraq \mc O ((x_i \cdot x_j) \cdot y_k) \wedge \mc O ((y_i \cdot y_j) \cdot x_k) \\
n_i \mid^p n_j &\qraq \exists \tilde{x_j}, \tilde{y_j} (\pi (\tilde{x_j}, x_j, t) \wedge \tilde{x_j} \cdot \tilde{y_j} = 1 \wedge \mc O (\tilde{x_j} \cdot y_i)) \wedge \mc O (\tilde{y_j} \cdot x_i))
\end{align*}
Here, $\pi$ is the formula from \Cref{C:define-Pn}.
One sees that $F \models \varphi'$ if and only if $\nat \models \varphi$.
Since $\varphi'$ is a positive-existential $\La$-sentence, the undecidability of the positive-existential $\La$-theory of $K$ follows.
\end{proof}
This Theorem will be applied in the following form:
\begin{cor}\label{C:Pheidas}
Let $K$ be a field of characteristic $p > 0$, let $F$ be a function field in one variable over $K$.
Let $\La$ be a first-order language containing the symbols $+, -, \cdot, 0, 1, t$, where $t$ is a constant symbol.
Consider $F$ as an $\La$-structure by interpreting $+, -, \cdot, 0, 1$ in the obvious way, and such that for some valuation $v$ on $F$, $t^F$ is transcendental over $K$, and there exists a positive-existentially $\La$-definable subset $\mc{O}$ of $F$ such that
$$ \ff_p(t^F) \neq \mc{O}_{v} \cap \ff_p(t^F) \subseteq \mc{O} \subseteq \mc{O}_{v}.$$
Then the positive-existential $\La$-theory of $F$ is undecidable.
\end{cor}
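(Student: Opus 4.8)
The plan is to reduce to \Cref{T:Pheidas} by internalising the definable set $\mathcal{O}$ as a relation symbol. Fix a positive-existential $\La$-formula $\theta(x)$ defining $\mathcal{O}$ over $F$, so that $F \models \theta(a) \Leftrightarrow a \in \mathcal{O}$ for all $a \in F$. Introduce a fresh unary relation symbol $\mathsf{O}$ and consider the language $\La_0 = \lbrace +, -, \cdot, 0, 1, t, \mathsf{O} \rbrace$, making $F$ an $\La_0$-structure by interpreting $+, -, \cdot, 0, 1, t$ exactly as in the given $\La$-structure and setting $\mathsf{O}^F = \mathcal{O}$. By hypothesis the valuation $v$ on $F$ satisfies $\ff_p(t^F) \neq \mc{O}_v \cap \ff_p(t^F) \subseteq \mathsf{O}^F \subseteq \mc{O}_v$ with $t^F$ transcendental over $K$, so \Cref{T:Pheidas} applies verbatim and the positive-existential $\La_0$-theory of $F$ is undecidable.

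It remains to transfer this undecidability to the (richer, but differently equipped) language $\La$. For this I would exhibit an effective translation $\varphi \mapsto \varphi^\ast$ sending each positive-existential $\La_0$-sentence $\varphi$ to a positive-existential $\La$-sentence $\varphi^\ast$ with $F \models \varphi \Leftrightarrow F \models \varphi^\ast$. Concretely, first bring $\varphi$ to the form $\exists x_1 \ldots \exists x_l\, \psi$ with $\psi$ a positive quantifier-free $\La_0$-formula. Each atomic subformula of $\psi$ is either a ring equation $F = G$ (already an $\Lar$-, hence $\La$-formula) or of the shape $\mathsf{O}(\tau)$ for some $\La_0$-term $\tau$, which here is automatically an $\Lar$-term in the variables $x_1, \ldots, x_l$. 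Replace each such $\mathsf{O}(\tau)$ by $\theta(\tau)$, the result of substituting $\tau$ for the free variable of $\theta$ (renaming the bound variables of $\theta$ first so as not to clash with $x_1, \ldots, x_l$); the resulting formula $\psi'$ is positive-existential, since a positive quantifier-free combination of positive-existential $\La$-formulas is again positive-existential by the remark in \Cref{S:definability}. Then $\varphi^\ast := \exists x_1 \ldots \exists x_l\, \psi'$ is a positive-existential $\La$-sentence, and because $\theta$ defines $\mathcal{O} = \mathsf{O}^F$ over $F$ one has $F \models \varphi \Leftrightarrow F \models \varphi^\ast$. The map $\varphi \mapsto \varphi^\ast$ is manifestly computable, the formula $\theta$ being fixed once and for all.

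Consequently, any algorithm deciding the positive-existential $\La$-theory of $F$ would, precomposed with this translation, decide the positive-existential $\La_0$-theory of $F$, contradicting \Cref{T:Pheidas}; hence the positive-existential $\La$-theory of $F$ is undecidable. The only point requiring a little care is the bookkeeping in the translation — checking that substituting a term into the positive-existential formula $\theta$ preserves positive-existentiality and that bound-variable clashes are avoided by standard renaming — but there is no genuine mathematical obstacle, the entire substance of the statement being already contained in \Cref{T:Pheidas}.
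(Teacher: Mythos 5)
Your proposal is correct and is exactly the reduction the paper leaves implicit when it states the corollary with only the comment that \Cref{T:Pheidas} ``will be applied in the following form'': you internalise the definable set as a fresh relation symbol, apply the theorem, and then observe that substituting the positive-existential defining formula $\theta$ for each occurrence of $\mathsf{O}(\tau)$ yields an effective, truth-preserving translation of positive-existential $\La_0$-sentences into positive-existential $\La$-sentences, so that decidability of the latter theory would contradict the theorem.
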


\section{Proof of the main theorem}\label{S:final}
Consider a function field in one variable $F/K$ with arbitrary base field $K$ and let $t \in F$ be transcendental over $K$.
In view of \Cref{C:Pheidas}, to show that $\Th_{\exists}(F, \zz[t])$ is undecidable, it remains to find some existentially definable subset $\mc{O}$ in $F$ as described there.
In particular, in the case of a global function field (i.e.~with $K$ a finite field), the hypotheses on $\mc{O}$ demanded in \Cref{C:Pheidas} are satisfied when $\mc{O}$ is a positive-existentially definable non-trivial valuation ring on $F$.
We emphasize that the positive-existential definability of valuation rings in global function fields has been known for a long time if one allows arbitrary parameters, see \cite[Lemma 3.22]{ShlapentokhGlobal} for function fields of odd characteristic, or \cite[Theorem 5.15]{EisentragerThesis} for a proof including characteristic $2$.
Thus, the only new challenge taken up here is to obtain this positive-existential definability without introducing any additional parameters beyond the element $t$.
This will be achieved in \Cref{T:defining-valuation}.

The reader should compare the results of this section with \cite[Section 6]{EisShlap17} where corresponding positive-existential definability and decidability results are obtained under essentially identical hypotheses, but with less control on the required parameters in the language.

\begin{lem}\label{L:purely-inseparable}
Let $(K, v)$ be a $\zz$-valued field of characteristic $p > 0$ with uniformiser $t$.
Let $L/K$ be a purely inseparable extension of degree $p^m$.
Then $v(K(t^{1/p^m})) = v(L(t^{1/p^m})) = \frac{1}{p^m}\zz$.
\end{lem}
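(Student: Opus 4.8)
The plan is to deduce both equalities from a single elementary principle: if $K$ has characteristic $p$ and $x$ lies in an algebraic extension of $K$ with $x^{p^m}\in K$, then for \emph{any} valuation $w$ extending $v$ one has $p^m w(x)=v(x^{p^m})\in\zz$, hence $w(x)\in\tfrac1{p^m}\zz$. So the whole argument is about showing that the two fields in the statement are purely inseparable over $K$ with their $p^m$-th powers already lying in $K$, and then reading off the value group.

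First I would record that $L^{p^m}\subseteq K$. Since $L/K$ is purely inseparable, for any $a\in L$ the extension $K(a)/K$ is purely inseparable of degree $p^k$ for some $k\ge 0$, so the minimal polynomial of $a$ over $K$ has the form $X^{p^k}-c$ with $c=a^{p^k}\in K$; as $p^k=[K(a):K]$ divides $[L:K]=p^m$ we get $k\le m$ and therefore $a^{p^m}=c^{p^{m-k}}\in K$. Next, since $t\in K$, the element $t^{1/p^m}$ is a root of $Y^{p^m}-t\in K[Y]$, so $L(t^{1/p^m})=L[t^{1/p^m}]$; applying the ring homomorphism $x\mapsto x^{p^m}$ gives $L(t^{1/p^m})^{p^m}=L^{p^m}[t]\subseteq K[t]=K$ (using the previous step and $t\in K$), and likewise $K(t^{1/p^m})^{p^m}=K[t]=K$. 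Thus both $K(t^{1/p^m})$ and $L(t^{1/p^m})$ are purely inseparable over $K$ with $p^m$-th power inside $K$.

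To conclude, take any extension $w$ of $v$ to $L(t^{1/p^m})$ (one exists by the Chevalley extension theorem). By the principle above applied to each $x\in L(t^{1/p^m})$, we get $w(L(t^{1/p^m}))\subseteq\tfrac1{p^m}\zz$. Conversely, $(t^{1/p^m})^{p^m}=t$ is a uniformiser, so $p^m w(t^{1/p^m})=v(t)=1$, giving $\tfrac1{p^m}\zz\subseteq w(L(t^{1/p^m}))$ and hence equality; restricting $w$ to the subfield $K(t^{1/p^m})$ yields an extension of $v$ there with the same value group.

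I do not expect a genuine obstacle here; the one point that needs a word of care is the meaning of the notation $v(\,\cdot\,)$ applied to an extension field. It is harmless: the computation above produces the value group $\tfrac1{p^m}\zz$ for every extension of $v$ to the fields in question, and in fact the same cancellation (legitimate because $\tfrac1{p^m}\zz$ is torsion-free) shows that the extension of $v$ along these purely inseparable extensions is unique, so there is really no choice to be made.
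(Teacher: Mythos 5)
Your proof is correct. It reaches the same two key facts as the paper — that $t^{1/p^m}$ has value $\tfrac1{p^m}$ and that every element of $L(t^{1/p^m})$ has $p^m$-th power in $K$ — but packages them a bit more elementarily. The paper instead sandwiches $L(t^{1/p^m})$ between $K(t^{1/p^m})$ and $K^{1/p^m}$: it gets $v(K(t^{1/p^m}))=\tfrac1{p^m}\zz$ by combining $v(t^{1/p^m})=\tfrac1{p^m}$ with the Fundamental Inequality (which bounds the ramification index by the field degree $p^m$), and gets $v(K^{1/p^m})=\tfrac1{p^m}\zz$ from the Frobenius isomorphism of valued fields $K^{1/p^m}\to K$; since $L(t^{1/p^m})$ lies between these two fields with matching value groups, its value group is pinned down. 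Your version avoids invoking the Fundamental Inequality altogether by arguing pointwise that $p^m w(x)=v(x^{p^m})\in\zz$ for any $x$ in the extension, which is the same content in a slightly more self-contained form; the trade-off is that you have to verify directly that $L(t^{1/p^m})^{p^m}\subseteq K$, which the paper absorbs into the containment $L(t^{1/p^m})\subseteq K^{1/p^m}$. Both arguments also agree on the point you flag at the end: the valuation extends uniquely along these purely inseparable extensions, so the notation $v(\cdot)$ is unambiguous.
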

\begin{proof}
We consider every purely inseparable extension of $K$ endowed with the unique extension of $v$ (see \cite[Corollary 3.2.10]{Eng05}), which we also denote by $v$.

Since $v(t^{1/p^m}) = \frac{1}{p^m}v(t) = \frac{1}{p^m}$, we must have that $v(K(t^{1/p^m})) = \frac{1}{p^m}\zz$ by the Fundamental Inequality \cite[Theorem 3.3.4]{Eng05}.
We also have $v(K^{1/p^m}) = \frac{1}{p^m}\zz$, as the map $x \mapsto x^{p^m}$ defines an isomorphism (of valued fields) $K^{1/p^m} \to K$.
Since $K(t^{1/p^m})$ and $K^{1/p^m}$ have the same value group, we finally conclude that also the intermediate field $L(t^{1/p^m})$ must have value group $\frac{1}{p^m}\zz$.
\end{proof}

\begin{thm}\label{T:defining-valuation}
Let $K$ be a field of characteristic $p > 0$ not containing an algebraically closed subfield, and $F/K$ a function field in one variable.
Consider $F$ as a structure in the language $\La = \{ +, -, \cdot, 0, 1, t \}$ where $t^F$ is transcendental over $K$.
Then there exists a positive-existentially $\La$-definable subset $\mc{O}$ of $F$ such that
$$\ff_p(t^F) \neq \mc{O}_v \cap \ff_p(t^F) \subseteq \mc O \subseteq \mc O_v$$
for some $K$-trivial $\zz$-valuation $v$ on $F$.
\end{thm}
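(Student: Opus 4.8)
The plan is to produce a positive-existential $\La$-formula defining a set $\mc O\subseteq F$ squeezed between $\mc O_v\cap\ff_p(t^F)$ and $\mc O_v$ for a $K$-trivial $\zz$-valuation $v$ on $F$ that one is free to choose. Two remarks streamline the target. Since $F/K(t)$ is finite, every non-trivial $K$-trivial valuation of $F$ restricts to a non-trivial valuation of $\ff_p(t^F)$, so the clause $\ff_p(t^F)\neq\mc O_v\cap\ff_p(t^F)$ is automatic. And because $\mc O$ need not be a subring, it suffices to control the situation \emph{on the subfield $\ff_p(t)$} (for the lower containment) and \emph{at the single place $v$} (for the upper containment); no local--global principle over $F$ itself is needed. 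Accordingly I would let $v$ lie above the place $t=0$ of $\ff_p(t)$, so $\mc O_v\cap\ff_p(t^F)=\ff_p[t]_{(t)}$, and let $\mc O$ consist of those $x\in F$ for which a finite system of symbol algebras $A^{(1)}_x,\dots,A^{(r)}_x$ built from $x$, $t$ and integers, with $r$ bounded in terms of the genus, all have isotropic reduced norm form over $F$. Isotropy of a quadratic form over a field is positive-existential using only $t$ and integers as parameters, so any such $\mc O$ is positive-existentially $\La$-definable; the substance is to choose the $A^{(i)}_x$ so that they all split over $\ff_p(t)$ whenever $x\in\ff_p[t]_{(t)}$, yet at least one fails to split over $F$ whenever $v(x)<0$.

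The upper containment $\mc O\subseteq\mc O_v$ is the easy direction: a pole of $x$ at $v$ should obstruct the splitting of some $A^{(i)}_x$ already over the completion $F_v$, irrespective of the parity of $v(x)$. With $t$ a uniformiser at $v$ this is arranged by letting the negative value $v(x)$ govern the residue invariant of a quaternion algebra when $p\neq2$ (residue characteristic $\neq2$), and of an Artin--Schreier symbol algebra with $t$ in one slot when $p=2$; the latter is exactly where the care developed in \Cref{S:p=2}, and the approach of \cite{Dit17}, take over from the classical quaternion computation. The lower containment exploits that $\ff_p(t)$ is a global function field: the Brauer class of each $A^{(i)}_x$ over $\ff_p(t)$ is trivial as soon as it is locally trivial at every place, and the $A^{(i)}_x$ can be designed so that this local triviality holds for all $x\in\ff_p[t]_{(t)}$, whence their reduced norm forms are isotropic over $\ff_p(t)$, a fortiori over $F$. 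This part specialises the quaternion-algebra constructions over rational function fields used in \cite{PheidasHilbert10,Shl96,EisShlap17}, trimmed so that nothing beyond $t$ and integers appears.

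Before any of this can be run, $F$, $t$ and $v$ must be put in a normal form, and here the hypothesis on $K$, together with \Cref{L:interpretation-argument} and \Cref{L:purely-inseparable}, enters. One first replaces $K$ by its relative algebraic closure in $F$ --- a finite extension, still without an algebraically closed subfield --- so that $F$ is geometrically integral; then, using that such a field admits proper finite separable constant extensions linearly disjoint from $F$, one base changes $F$ along such extensions and, with \Cref{L:purely-inseparable} used to replace $t$ by an appropriate $p$-power root, arranges that $F/\ff_p(t)$ is separable, that the chosen place $v$ above $t=0$ has residue field equal to the constant field, and that $t$ is a uniformiser at $v$. Each such base change replaces $F$ by a finite extension $F'$, again a one-variable function field over a constant field without an algebraically closed subfield; \Cref{L:interpretation-argument}, applied with $k$ a finite subextension of $\ff_p(t)/\ff_p$ carrying the relevant data (so that $\Lar(k)$-definability amounts to using only $t$ and integers), then descends positive-existential definability of the set built over $F'$ to a set over $F$ with the same squeezing property.

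I expect the main obstacle to be precisely this last package of reductions: restricting the parameters to $t$ forces one to realise every auxiliary datum --- uniformiser, residue-field witness, anisotropy data --- out of $t$ and integers \emph{after} the base changes, and to verify at each step that the normality and linear-disjointness hypotheses of \Cref{L:interpretation-argument} can indeed be met, the constant-field bookkeeping being the subtlest point. On top of this, the characteristic-$2$ case requires replacing quaternion algebras by Artin--Schreier symbol algebras throughout both the local analysis at $v$ and the global analysis over $\ff_p(t)$, which roughly doubles the casework.
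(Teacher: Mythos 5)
Your high-level architecture matches the paper's (normalise $F$, $t$ and $v$ via \Cref{L:interpretation-argument} and \Cref{L:purely-inseparable}, then sandwich a positive-existentially definable set between $\mc O_v\cap\ff_p(t)$ and $\mc O_v$ by exploiting non-splitting of a central simple algebra at $v$), but the central construction you propose does not work in the required generality, and some of the difficulties you flag are misplaced.

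The decisive gap is your choice of degree-$2$ symbol algebras. You propose quaternion algebras when $p\neq 2$ and Artin--Schreier symbol algebras when $p=2$. However, the only hypothesis on $K$ is that it contains no algebraically closed subfield, which forces the relative algebraic closure $K_0$ of $\ff_p$ in $F$ to be non-algebraically-closed --- but $K_0$ may very well be \emph{quadratically closed}. For instance $K_0 = \bigcup_{n}\ff_{p^{2^n}}$ has no separable quadratic extension at all (and in odd characteristic is perfect, so no quadratic extensions whatsoever): over such a constant field every quaternion algebra, and every Artin--Schreier symbol algebra, with one slot in $\ff_p(t)^\times$ splits over the completion $F^v$, because the required unramified quadratic extension of the residue field does not exist. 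Your upper containment $\mc O\subseteq\mc O_v$ would then fail entirely. The paper avoids this by not fixing the degree in advance: it chooses a prime $q$ such that $K_0$ admits a degree-$q$ Galois extension (such a $q$ exists precisely because $K_0$ is not algebraically closed), and builds a single cyclic algebra $\mc A=(G,\sigma,t)$ of degree $q$. This is exactly the extra flexibility imported from Dittmann's framework \cite{Dit17}, and it is the main point where the argument departs from the classical Pheidas--Shlapentokh quaternion computations.

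A related consequence is that the characteristic-$2$ bifurcation you anticipate is spurious. The paper's construction is uniform in $p$, because $q$ is dictated by the Galois theory of $K_0$, not by $p$ (it may or may not equal $p$; both cases are handled by the same reduced-trace/reduced-norm formalism). The Artin--Schreier manipulations of \Cref{S:p=2} belong to the Frobenius-orbit definability and play no role in defining the valuation. Two further points are worth noting. First, your claim that the pole of $x$ obstructs splitting \emph{irrespective of the parity of $v(x)$} is asserted but not justified; for a degree-$q$ algebra one must genuinely neutralise the divisibility of $v(x)$ by $q$, and the paper does this with the normalising expression $x^q g(\alpha, yx)^{-1}$ together with an auxiliary $y\in\ff_p(t)$ produced by Strong Approximation, which is the actual crux you need to spell out. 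Second, your ``$r$ bounded in terms of the genus'' has no analogue in the paper's proof: the genus does not enter the valuation definition at all (it only enters the Frobenius-orbit formula of \Cref{T:define-Pn}). Finally, while isotropy of reduced norm forms would be positive-existential, the paper instead defines the set $T(\mc A)$ of sums of reduced traces of reduced-norm-one elements, which is what makes the squeezing $\bigcap_{w\in\Delta\mc A}\mc O_w\cap\ff_p(t)\subseteq T(\mc A)\subseteq\mc O_v$ from \cite{Dit17} directly applicable.
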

\begin{proof}
Let us write $t$ for $t^F$.
Let $F_s$ denote the relative separable closure of $K(t^F)$ in $F$.
By \cite[Theorem V.5.3 and Lemma V.8.3]{Chevalley-Functions}, all but finitely many $\zz$-valuations on $\ff_p(t)$ extend to a $\zz$-valuation on $F_s$.
We may thus fix a $\zz$-valuation $v$ on $F_s$ and $\tilde{t} \in \ff_p(t)$ such that $\tilde{t}$ is a uniformiser for $v$.
Replacing $t$ by $\tilde{t}$, we assume from now on that $t$ is a uniformiser of $v$ (on $F_s$).

We use \Cref{L:purely-inseparable} to find $m \in \nat$ such that $v(F(t^{1/p^m})) = v(F_s(t^{1/p^m}))$.
Applying \Cref{L:interpretation-argument} we may replace $F$ by $F(t^{1/p^m})$ and $t$ by $t^{1/p^m}$ and thus assume from now on that $t$ is a uniformiser for the valuation $v$ on $F$.

Consider the residue field $Fv$ of $v$, which is a finite extension of $K$.
In particular, the hypotheses on $K$ imply that $Fv$ does not contain an algebraically closed subfield.
It follows that there exists some prime number $q$ and a finite extension $\ell / \ff_p$ contained in $Fv$ such that $\ell$ has a finite Galois extension of degree $q$ not contained in $Fv$.
The field $F' = F \otimes_{F \cap \ell} \ell$ is a function field in one variable over $\ell$, and the valuation $v$ on $F$ uniquely extends to a $\zz$-valuation on $F'$ (which we also denote by $v$).
We may again invoke \Cref{L:interpretation-argument} to replace $F$ by $F'$, and so we may assume without loss of generality that $\ell \subseteq F$.

Let $f \in \ell[T]$ be an irreducible degree $q$ polynomial.
Since $\ell/\ff_p$ is separable and hence generated by a single element, we may write $f(T) = g(\alpha, T)$ for some $g \in \ff_p[A, T]$, $\alpha \in \ell$.
Let $h(T)$ be the minimal polynomial of $\alpha$ over $\ff_p$ and denote by $\alpha_1, \ldots, \alpha_k$ the conjugates of $\alpha = \alpha_1$.
The splitting field $G$ of $f$ over $F$ is identical to the splitting field of $g(\alpha_i, T)$ for every $i \in \lbrace 1, \ldots, k \rbrace$ and is a cyclic extension of $F$ of degree $l$; let $\sigma$ be a generator of its Galois group.
We may consider the cyclic algebra $\mc A = (G, \sigma, t)$ which is by definition generated as an $F$-algebra by $G$ and an element $y$ subject to the relations $y^q = t$ and $xy = y\sigma(x)$ for all $x \in G$.
In view of the previous discussion, by existentially quantifiying over the roots of $h(T)$ when describing $G$, we have that $\mc A$ is positive-existentially $\La$-interpretable in $F$ without parameters.

Write $\Delta \mc A$ for the set of $K$-trivial $\zz$-valuations $w$ on $F$ for which $\mc A \otimes_F F^w$ is a division algebra, where $F^w$ denotes the completion.
We have that $v \in \Delta \mc A$: by \cite[\nopp 15.1, Lemma]{Pie82} $\mc A \otimes_F F^v$ is a division algebra if and only if $t$ is not a norm of the extension $GF^v/F^v$, but since this is an unramified extension of degree $q$, all norms must have $v$-value divisible by $q$.
Denoting by $\Trd$ and $\Nrd$ the reduced trace and reduced norm operations on $\mc A$, respectively, we obtain that $\Trd(x) \in \mc{O}_v$ for any $x \in \mc A$ with $\Nrd(x) = 1$, as the latter condition forces $x$ to be integral over $\mc{O}_v$: either $x^q = 1$ and this is immediate, or $F^v(x)/F^v$ is a degree $q$ field extension contained in $\mc A \otimes_F F^v$ and $1 = \Nrd(x) = \operatorname{N}_{F^v(x)/F^v}(x)$ \cite[Lemma 2.3]{Dit17}, in which case $x$ lies in the valuation ring of the unique extension of $v$ from $F^v$ to $F^v(x)$ \cite[Theorem 14:1]{OMe00} and is hence integral over $\mc O_v$.
Combining this with \cite[Proposition 2.9]{Dit17} we get
$$ \bigcap_{w \in \Delta \mc A} \mc{O}_{w} \cap \ff_p(t^F) \subseteq T(\mc A) \subseteq  \mc{O}_{v} $$
where 
$$T(\mc A) = \lbrace \Trd(x) + \Trd(y) \mid x, y \in \mc A, \Nrd(x) = \Nrd(y) = 1 \rbrace,$$ which is positive-existentially $\La$-definable.
By Strong Approximation, we may find $y \in \ff_p(t)$ such that $v(y) > 0$ and $w(y) \leq 0$ for $w \in \Delta \mc A$ with $w\vert_{\ff_p(t)}$ not equivalent to $v\vert_{\ff_p(t)}$.
Now define the positive-existentially $\La$-definable set
$$S = \lbrace x \in F \mid \exists \alpha \in F : x^q g(\alpha, yx)^{-1} \in T(\mc A), h(\alpha) = 0 \rbrace.$$
Since for any $w \in \Delta \mc A$, $x \in F$, and $\alpha \in F$ with $h(\alpha) = 0$ we have $w(g(\alpha, x)) = w(f(x)) = \min \{ 0, q w(x) \}$, we compute that 
$$ \mc{O}_{v} \cap \ff_p(t^K) \subseteq S \subseteq \mc{O}_{v}$$
as desired.
\end{proof}

\begin{thm}\label{T:ES-constants}
Let $K$ be a field of characteristic $p > 0$ not containing an algebraically closed subfield, $F/K$ a function field.
Consider $F$ as a structure in the language $\La = \lbrace +, -, \cdot, 0, 1, t \rbrace$, where $t^F \in F$ is transcendental over $K$.
Then the positive-existential $\La$-theory of $F$ is undecidable.
\end{thm}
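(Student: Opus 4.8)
The plan is to reduce the general case of a function field $F/K$ of arbitrary transcendence degree to the one-variable case already handled in \Cref{T:defining-valuation} and \Cref{C:Pheidas}. Concretely, write $t = t^F$ and let $t_1, \ldots, t_d$ be a transcendence basis of $F/K$ with $t_d = t$; set $K' = K(t_1, \ldots, t_{d-1})$, so that $F/K'$ is a function field in one variable and $t$ is transcendental over $K'$. The key point is that $K'$ still does not contain an algebraically closed subfield: any algebraically closed subfield of $K'$ would be algebraic over its intersection with the algebraic closure of $\ff_p$ in $K'$, which is contained in $K$, contradicting the hypothesis on $K$ (more simply, an algebraically closed subfield is relatively algebraically closed, hence algebraic over $\ff_p$, hence contained in the relative algebraic closure $K_0$ of $\ff_p$ in $F$, which is contained in $K$ and not algebraically closed).

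With this reduction in place, I would apply \Cref{T:defining-valuation} to the function field $F/K'$ in one variable: it yields a positive-existentially $\La$-definable subset $\mc{O}$ of $F$ with $\ff_p(t^F) \neq \mc{O}_v \cap \ff_p(t^F) \subseteq \mc{O} \subseteq \mc{O}_v$ for some $K'$-trivial $\zz$-valuation $v$ on $F$. Note that the definability is in the language $\La = \{+, -, \cdot, 0, 1, t\}$ with no additional parameters, which is exactly what is needed. Then \Cref{C:Pheidas}, applied to $F$ as a function field in one variable over $K'$ with the valuation $v$ and the positive-existentially $\La$-definable set $\mc{O}$, gives immediately that the positive-existential $\La$-theory of $F$ is undecidable, which is the assertion of the theorem (and, by the final discussion of \Cref{S:definability}, is the same as the undecidability of $\Th_\exists(F, \zz[t])$).

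I do not expect a genuine obstacle here, since all the substantial work has been carried out in the preceding sections; the only thing to verify carefully is that replacing $K$ by $K' = K(t_1, \ldots, t_{d-1})$ preserves the hypothesis ``does not contain an algebraically closed subfield'', and that the constant symbol $t$ in the language still denotes an element transcendental over the new base field $K'$ (which is clear since $t = t_d$ is part of the transcendence basis). One mild point worth spelling out is that \Cref{C:Pheidas} is stated for function fields in one variable, so the reduction to $d = 1$ must be performed before invoking it; this is purely formal. Thus the proof is essentially a two-line deduction: reduce to the one-variable case, then combine \Cref{T:defining-valuation} with \Cref{C:Pheidas}.

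\begin{proof}[Proof of \Cref{T:ES-constants}]
Write $t = t^F$. Let $d$ be the transcendence degree of $F/K$ and let $t_1, \ldots, t_d$ be a transcendence basis of $F/K$ with $t_d = t$. Set $K' = K(t_1, \ldots, t_{d-1})$, so that $F/K'$ is a function field in one variable and $t$ is transcendental over $K'$. Any algebraically closed subfield of $K'$ is relatively algebraically closed in $K'$, hence algebraic over $\ff_p$, hence contained in the relative algebraic closure of $\ff_p$ in $F$; the latter is contained in $K$ and, by hypothesis, is not algebraically closed. Therefore $K'$ contains no algebraically closed subfield. Applying \Cref{T:defining-valuation} to $F/K'$, we obtain a positive-existentially $\La$-definable subset $\mc O$ of $F$ and a $K'$-trivial $\zz$-valuation $v$ on $F$ with
$$ \ff_p(t^F) \neq \mc O_v \cap \ff_p(t^F) \subseteq \mc O \subseteq \mc O_v. $$
By \Cref{C:Pheidas}, applied to the function field $F/K'$ in one variable together with the valuation $v$ and the set $\mc O$, the positive-existential $\La$-theory of $F$ is undecidable.
\end{proof}
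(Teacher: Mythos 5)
Your proof follows the paper's argument exactly: reduce to the one-variable case by replacing $K$ with $K' = K(t_1,\ldots,t_{d-1})$, then combine \Cref{T:defining-valuation} with \Cref{C:Pheidas}; the paper's own proof is precisely this two-step deduction. One small remark on the verification you added (which the paper leaves implicit) that the reduction preserves the hypothesis on $K$: your intermediate claim that an algebraically closed subfield of $K'$ is ``algebraic over $\ff_p$'' is not literally correct, since an algebraically closed field can have positive transcendence degree over $\ff_p$. The clean way to phrase it is that a field of characteristic $p$ contains an algebraically closed subfield if and only if it contains $\overline{\ff_p}$ (the minimal such subfield); since $\overline{\ff_p}$ is algebraic over $\ff_p$ and $K$ is relatively algebraically closed in the purely transcendental extension $K' = K(t_1,\ldots,t_{d-1})$, having $\overline{\ff_p} \subseteq K'$ forces $\overline{\ff_p} \subseteq K$, contradicting the hypothesis. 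With that wording fixed, your argument is complete and correct.
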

\begin{proof}
Let $d$ be the transcendence degree of $F$ over $K$ and let $t_1, \ldots, t_d$ be a transcendence basis of $F$ over $K$ with $t_d = t^F$.
Replacing $K$ by $K(t_1, \ldots, t_{d-1})$, we may reduce without loss of generality to the case where $F/K$ is a function field in one variable.

The statement now follows directly from \Cref{C:Pheidas} and \Cref{T:defining-valuation}.
\end{proof}
\noindent
That \Cref{T:ES-constants} is a rephrasing of \Cref{TI:ES-constants} was explained at the end of \Cref{S:definability}.

\printbibliography
\end{document}